\theoremstyle{plain}
\newtheorem{theorem}{Theorem}[section]
\newtheorem{proposition}[theorem]{Proposition}
\newtheorem{lemma}[theorem]{Lemma}
\newtheorem{corollary}[theorem]{Corollary}
\theoremstyle{definition}
\newtheorem{definition}[theorem]{Definition}
\theoremstyle{remark}
\newtheorem{remark}[theorem]{Remark}
\begin{document}

\title[Local well-posedness and global existence]
{Local well-posedness and global existence for a multi-component Novikov equation}

\author{Zhigang Li}
\address{College of Sciences\\China University of Mining and Technology\\Beijing, 100083, People's Republic of China}
\email{lzgcumtb@163.com}

\author{Yuxi Hu*}
\address{College of sciences\\China University of Mining and Technology\\Beijing, 100083, People's Republic of China}
\email{yxhu86@163.com}

\author{Xinglong Wu}
\address{School of Science\\Wuhan University of Technology\\Wuhan, 430070, People's Republic of China}
\email{wxl8758669@aliyun.com}

\keywords{well-posedness, blow-up, global existence, peakon}
\begin{abstract}
Considered herein is a multi-component Novikov equation, which admits bi-Hamiltonian structure, infinitely many conserved quantities and peaked solutions. In this paper, we deduce two blow-up criteria for this system and global existence for some two-component case in $H^s$. Finally we verify that the system possesses peakons and periodic peakons.

Correspondence should be addressed to Yuxi Hu; hu-yuxi@163.com
\end{abstract}

\maketitle

\section{Introduction}
In this paper we consider the following multi-component Novikov equations [2]:
\begin{equation}
\begin{cases}
m_{it}=\sum_{j=1}^N\left(-2m_iu_{jx}v_j-m_iu_jv_{jx}-m_{ix}u_jv_j-u_{ix}m_jv_j+u_im_jv_{jx}\right),\\
n_{it}=\sum_{j=1}^N\left(-2n_iu_jv_{jx}-n_iu_{jx}v_j-n_{ix}u_jv_j-v_{ix}n_ju_j+v_in_ju_{jx}\right),\\
m_i=u_i-u_{i,xx},\ \ \ n_i=v_i-v_{i,xx}.
\end{cases}
\end{equation}

Many scholars have focused on Camassa-Holm type equations over two decades. Most of these equations admit zero-curvature equation, bi-Hamiltonian structure, and infinite many conservation laws. CH type equations have two remarkable mathematical features : peaked solutions and wave-breaking phenomenon, this is the reason why CH type equations are brought into focus.

For $N=1$, (1) is reduced to Geng-Xue (GX) equation [3],
\begin{equation}
\begin{cases}
m_t+uvm_x+3u_xvm=0,\\
n_t+uvn_x+3uv_xn=0,\\
m=u-u_{xx},\ \ \ n=v-v_{xx}.
\end{cases}
\end{equation}
This is a two-component CH type equation which is derived by Geng and Xue, and the authors also calculated this equation also admits multi-peakons and infinite many conservation laws, but the bi-Hamiltonian structure was constructed by Li and Liu [4]. In fact, (2) is an extension of Novikov (Nov) equation[5] if we take $u=v$,
$$m_t+u^2m_x+3uu_xm=0, \ \ \ m=u-u_{xx}.$$
Novikov equation was derived by Novikov, used the perturbative symmetry approach in classification of nonlocal PDES with three order nonlinearity. It should be note that Novikov equation is not symmetrical, which means $(u,x) \nrightarrow (-u,-x)$.

Another interesting reduction for (2) is DP equation if we take $v=1$,
$$m_t+um_x+3u_xm=0, \ \ \ m=u-u_{xx}.$$
The DP equation was derived by Degasperis and Procesi [6] by applying the method of asymptotic integrability to a three order dispersive PDE. A big feature for DP equation is shock peakon [7,8]
$$u(t,x)=-\frac{1}{t+k} \operatorname{sgn}(x) e^{-|x|}.$$

In recent years, people have generated CH equation to some multi-component sense. Holm and Ivanov derived a multi-component system (CH(n,k)) [9],
\begin{equation*}
\begin{cases}
\partial_t q_p + \displaystyle\sum_{j=0}^{k-1}\left(\partial_x q_{p+j} +q_{p+j} \partial_x\right) u_j -\frac{1}{2}q_{p+k,x}=0,\\
q_{n+1}=q_{n+2}=\cdots=q_{n+k}=0,
\end{cases}
\end{equation*}
where $u_0,\cdots,u_{k-1}$ and $q_1,\cdots,q_k$ satisfies the following differential relations
\begin{equation*}
\begin{cases}
q_{n-r,t}=\displaystyle -\sum_{s=\max (0,r-k)}^r \left(\partial_x q_{n-s} +q_{n-s} \partial_x\right) u_{r-s}, r=0,1,\cdots,n-1,\\
0=\frac{1}{2}\left(\partial_x-\partial_x^3\right)u_r+\displaystyle \sum_{s=1}^{\min (n,k-r)}\left(\partial_x q_s +q_s \partial_x\right)u_{r+s}, r=0,1,\cdots,k-1,\\
0=\left(\partial_x-\partial_x^3\right)u_k.
\end{cases}
\end{equation*}

Obviously, CH(1,1) is Camassa-Holm (CH) equation [10-12],
$$m_t+2u_xm+um_x=0,\ \ \ m=u-u_{xx}.$$
This equation was derived by Camassa and Holm [10] by using an asymptotic expansion in the Hamiltonian for Euler¡¯s equations directly in the shallow water regime. While, this equation was firstly found by Fokas and Fuchssteiner [11] when the authors used the KdV recursion operator. Olver also obtain such equation by taking approach of tri-Hamiltonian duality on KdV equation [12]. For $n=2,k=1$, CH(2,1) is the two-component (2CH) equation [13],
\begin{equation*}
\begin{cases}
m_t+\left(\partial_x m + m\partial_x\right)u \mp \rho \rho_x=0, \\
\rho_t+(u\rho)_x=0,\ \ \ m=u-u_{xx}.
\end{cases}
\end{equation*}
This equation was also founded by Olver [12] through such technique on Ito system, but it was firstly derived rigorously by Constantin and Ivanov [13]. 2CH equation may be regarded as a model of shallow water waves, where $\rho$ denotes the density. For more details of 2CH, one can refer to [14-16] and the references therein.

Xia and Qiao has offered another multi-component generation of CH equation (CH(N,H))[17]
\begin{equation*}
\begin{cases}
m_{jt}=(m_jH)_x+m_jH+\dfrac{1}{(N+1)^2}\displaystyle\sum_{i=1}^N\left[m_i(u_j-u_{jx})(v_i+v_{ix})+m_j(u_i-u_{ix})(v_i+v_{ix})\right],\\
n_{jt}=(n_jH)_x-n_jH-\dfrac{1}{(N+1)^2}\displaystyle\sum_{i=1}^N\left[n_i(u_i-u_{ix})(v_j+v_{jx})+n_j(u_i-u_{ix})(v_i+v_{ix})\right],\\
m_j=u_j-u_{jxx},\ \ \ n_j=v_j-v_{jxx},\ \ \ 1 \le j \le N.
\end{cases}
\end{equation*}
where $H$ is an arbitrary smooth function of $u$, $v$, $u_x$ and $v_x$. It is easy to see, CH(N,H) is different with CH(n,k) while each equation has the same structure. When $N=1$ [18], with a special taken of $H$-a polynomial function, CH(1,H) can be reduced to some certain equations. For example, if we take $H=\frac{1}{2}(u-u_x)(v+v_x)$, it is reduced to SQQ equation [19],
\begin{equation*}
\begin{cases}
m_t=\dfrac{1}{2}\left[m(u-u_x)(v+v_x)\right]_x,\\
n_t=\dfrac{1}{2}\left[n(u-u_x)(v+v_x)\right]_x,\\
m=u-u_{xx},\ \ \ n=v-v_{xx}.
\end{cases}
\end{equation*}
Furthermore, if we take $v=1$, and $H=u$, then CH(1,H) is CH equation; for $v=2u$, $H=(u^2-u_x^2)$, CH(1,H) is reduced to FORQ (mCH) equation [12,20,21],
$$m_t+[m(u^2-u_x^2)]_x=0,\ \ \ m=u-u_{xx}.$$
Zhang and Yin has studied some qualitative analysis of CH(N,H) [22], such as well-posedness in Besov space, and blow-up phenomenon.

It should be noticed that system (1) can not be reduced by CH(N,H). The well-posedness and global existence of Novikov equation has been studied by Wu, Yin [24,25], the Cauchy problem for GX equation was studied by Mi, Mu and Tao [26]. Very recently, the well-posedness for (1) has been proved by Mi, Guo and Luo [27]. To our best knowledge, the local well-posedness in critical Besov space, blow-up criteria and global existence have not be studied yet. Inspired by Wu and Yin, the aim of this paper is to consider the local well-posedness in critical sense and blow-up criteria of system (1) by using the transport equation technique of Besov space, and global existence for some two-component reductions, we will show that the multi-component Novikov equation also admits peaked and periodic peaked traveling wave solutions. The outline of this paper is as follows: In section 2, we recall Littlewood-Paley theory and some lemmas which are very important for the later proofs. In section 3, we prove the local well-posedness in critical Besov space, by using transport equation theory in Besov space. We show the blow-up criteria for (1) and some two-component reduction cases in section 4. The global existence of Geng-Xue equation in $H^s(\mathbb{R})$ $s>\frac{5}{2}$ is proved in section 5. And we show that system (1) admits peakons and periodic peakons in last section.

\section{Preliminaries}

\begin{proposition}$[1]$
(Littlewood-Paley decomposition)Let\ $\mathcal{C}$ be the annulus\ $\{\xi \in \mathbb{R}^d|\ \tfrac{3}{4} \leq |\xi| \leq \tfrac{8}{3}\}$.\\There exist radial functions $\chi$ and $\varphi$ , valued in the interval $[0,1]$, belonging respectively to $\mathcal{D}(B(0,\frac{4}{3}))$, and such that
$$\forall \xi \in \mathbb{R}^d,\chi(\xi)+\sum_{j \geq 0}\varphi(2^{-j}\xi)=1,$$
$$\forall \xi \in \mathbb{R}^d\backslash\{0\},\sum_{j \in \mathbb{Z}} \varphi(2^{-j}\xi)=1,$$
$$|j-j'| \geq 2 \Rightarrow Supp\ \varphi(2^{-j}\cdot)\cap Supp\ \varphi(2^{-j'}\cdot) = \varnothing,$$
$$j \geq 1 \Rightarrow Supp\ \chi \cap Supp\ \varphi(2^{-j}\cdot) = \varnothing,$$
the set $\widetilde{\mathcal{C}}=B(0,\frac{2}{3})+\mathcal{C}$ is an annulus, and we have
$$|j-j'| \geq 5 \Rightarrow 2^{j'}\mathcal{C} \cap 2^j\mathcal{C} = \varnothing.$$
Further, we have
$$\forall \xi \in \mathbb{R}^d, \frac{1}{2} \leq \chi^2(\xi)+\sum_{j \geq 0}\varphi^2 (2^{-j}\xi) \leq 1,$$
$$\forall \xi \in \mathbb{R}^d\backslash\{0\},\frac{1}{2} \leq \sum_{j \in \mathbb{Z}} \varphi^2(2^{-j}\xi) \leq 1.$$
\end{proposition}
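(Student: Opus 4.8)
Since this is the classical Littlewood--Paley partition of unity, the natural plan is not to derive an identity but to \emph{construct} the pair $(\chi,\varphi)$ by hand and then verify each listed property in turn. I would begin by producing, via mollification of the indicator function of a ball of intermediate radius, a radial function $\chi\in\mathcal{D}(B(0,\frac43))$ with values in $[0,1]$ that is nonincreasing in $|\xi|$ and equals $1$ on $B(0,\frac34)$. With $\chi$ in hand I would simply \emph{set} $\varphi(\xi):=\chi(2^{-1}\xi)-\chi(\xi)$. Because $\chi$ is nonincreasing we have $\chi(2^{-1}\xi)\ge\chi(\xi)$, so $\varphi\ge0$; and because $\chi(2^{-1}\xi)$ and $\chi(\xi)$ both equal $1$ when $|\xi|\le\frac34$ and both vanish when $|\xi|\ge\frac83$, the smooth radial function $\varphi$ takes values in $[0,1]$ and is supported precisely in the annulus $\mathcal{C}=\{\frac34\le|\xi|\le\frac83\}$.

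The two summation identities then come out of a telescoping cancellation. For the first, $\chi(\xi)+\sum_{0\le j\le J}\varphi(2^{-j}\xi)=\chi(2^{-J-1}\xi)\to\chi(0)=1$ as $J\to\infty$; for the second, $\sum_{-K\le j\le J}\varphi(2^{-j}\xi)=\chi(2^{-J-1}\xi)-\chi(2^{K}\xi)$, whose limit is $1-0=1$ for each fixed $\xi\neq0$ once $2^{K}|\xi|$ escapes the support of $\chi$. The support statements are then pure arithmetic of annuli: $\mathrm{Supp}\,\varphi(2^{-j}\cdot)=2^{j}\mathcal{C}$, so a gap $|j-j'|\ge2$ gives $\frac34\cdot2^{j'}\ge3\cdot2^{j}>\frac83\cdot2^{j}$ and hence disjoint supports, while for $j\ge1$ the inner radius $\frac34\cdot2^{j}\ge\frac32$ already exceeds the outer radius $\frac43$ of $\mathrm{Supp}\,\chi$. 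The same triangle-inequality bookkeeping identifies $\widetilde{\mathcal{C}}=B(0,\frac23)+\mathcal{C}$ with the annulus $\{\frac{1}{12}\le|\xi|\le\frac{10}{3}\}$ and, by comparing inner and outer radii of the dilates, yields the stated separation; note that for $\mathcal{C}$ itself the last line is already implied a fortiori by the $|j-j'|\ge2$ disjointness.

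It remains to establish the quadratic bounds, and here lies the only point requiring care. The disjointness relations guarantee that at each fixed $\xi$ at most two of the functions in the family $\{\chi,\varphi(2^{-j}\cdot)\}_{j\ge0}$ (respectively $\{\varphi(2^{-j}\cdot)\}_{j\in\mathbb{Z}}$) are nonzero---this is exactly where the sharp constants $\frac34$ and $\frac83$ are needed, since they prevent three consecutive bumps from overlapping. Calling the two possibly nonzero values $a,b\in[0,1]$, the first identity forces $a+b=1$, and then the elementary inequalities $\tfrac12=\tfrac12(a+b)^2\le a^2+b^2\le a+b=1$ deliver both the lower and the upper bound simultaneously. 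Everything else being routine, I would in practice cite the result (it is quoted from reference $[1]$); the construction above is the self-contained argument behind the citation, and the genuinely delicate ingredient is the two-term overlap structure that underlies the constant $\tfrac12$.
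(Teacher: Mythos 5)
The paper never proves this proposition: it is quoted (with some typos) from reference $[1]$, the Bahouri--Chemin--Danchin monograph, and used as a black box, so there is no internal argument to compare yours against. Your proposal is essentially the standard construction that underlies that citation, and it is correct: take $\chi$ radial, nonincreasing, equal to $1$ on $B(0,\tfrac34)$ and supported in $B(0,\tfrac43)$; set $\varphi(\xi)=\chi(\xi/2)-\chi(\xi)$, which is then valued in $[0,1]$ and supported in $\mathcal{C}$; telescope for the two partition identities; do annulus arithmetic for the support statements; and use the two-term overlap together with $a+b=1\Rightarrow\tfrac12=\tfrac12(a+b)^2\le a^2+b^2\le a+b=1$ for the quadratic bounds. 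The overlap claim deserves one explicit line: $\varphi(2^{-j}\xi)\neq0$ forces $\tfrac34\,2^{j}\le|\xi|\le\tfrac83\,2^{j}$, i.e.\ $j$ lies in an interval of length $\log_2\tfrac{32}{9}<2$, so at most two (consecutive) indices contribute, and $\chi$ can only overlap $\varphi(2^{0}\cdot)$; with that spelled out your constant $\tfrac12$ argument is complete. You also correctly diagnosed the one defective line of the statement: as printed (with $\mathcal{C}$ in both factors) the $|j-j'|\ge5$ separation is a trivial consequence of the $|j-j'|\ge2$ disjointness. Be aware that the version actually in $[1]$ is the mixed one, $|j-j'|\ge5\Rightarrow2^{j}\widetilde{\mathcal{C}}\cap2^{j'}\mathcal{C}=\varnothing$, which your radii computation $\widetilde{\mathcal{C}}=\{\tfrac1{12}\le|\xi|\le\tfrac{10}{3}\}$ does yield (the case $j\ge j'+5$ is tight, $\tfrac{1}{12}2^{j}\ge\tfrac83\,2^{j'}$, and uses openness of $B(0,\tfrac23)$), whereas a pure $\widetilde{\mathcal{C}}$--$\widetilde{\mathcal{C}}$ separation at gap $5$ would be false since $2^5<40$; your wording stays on the right side of this distinction.
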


Denote by $\mathcal{F}$ the Fourier transform and by $\mathcal{F}^{-1}$ its inverse. From now on ,we write $h=\mathcal{F}^{-1}\varphi$ and $\tilde{h}=\mathcal{F}^{-1}\chi$.The nonhomogeneous dyadic blocks $\Delta_j$ are defined by
$$\Delta_ju = 0 \ \ if\ \ j \leq -2,\ \Delta_{-1}u=\chi(D)u=\int_{\mathbb{R}^d}\tilde{h}(y)u(x-y)dy,$$
$$and,\ \Delta_ju=\varphi(2^{-j}D)u=2^{jd}\int_{\mathbb{R}^d}h(2^jy)u(x-y)dy\ \ if \ \ j \geq 0, $$
$$S_ju=\ \sum_{j' \le j-1}\Delta_{j'}u.$$

The nonhomogeneous Besov spaces are denoted by $B_{p,r}^{s}(\mathbb{R}^d)$
$$B_{p,r}^{s}(\mathbb{R}^d)=\{u \in \mathcal{S}'| \|u\|_{B_{p,r}^{s}}=\left(\sum_{j \geq -1}2^{rjs}\|\Delta_ju\|_{L^p}^r\right)^{\tfrac{1}{r}} \le \infty\}.$$

\begin{proposition}
$[1]$The following properties hold.\\
(i)Density: if $p,r \le \infty$, then $\mathcal{S}(\mathbb{R}^d)$ is dense in $B_{p,r}^{s}(\mathbb{R}^d)$,where $\mathcal{S}$ denotes the Schwartz space.\\
(ii)Embeddings: if $p_1 \leq p_2$ and $r_1 \leq r_2$, then $B_{p_1,r_1}^s \hookrightarrow B_{p_2,r_2}^{s-d(\tfrac{1}{p_1}-\tfrac{1}{p_2})}$.If $s_1 < s_2$, $1 \le p \le +\infty$ and $1 \le r_1,r_2 \le +\infty$, then the embedding $B_{p,r_2}^{s_2} \hookrightarrow B_{p,r_1}^{s_1}$ is locally compact.\\
(iii)Algebraic properties: for $s>0$, $B_{p,r}^s \cap L^{\infty}$ is an algebra. Moreover, $B_{p,r}^s$ is an algebra $\Leftrightarrow$ $B_{p,r}^s \hookrightarrow L^{\infty} \Leftrightarrow s>\tfrac{d}{p}$ or $s \ge \tfrac{d}{p}$ and $r=1$.\\
(iv)Fatou property: if $(u^{(n)})_{n \in \mathbb{N}}$ is a bounded sequence of $B_{p,r}^s $ which tends to u in $\mathcal{S}'$, then $u \in B_{p,r}^s$ and $$\|u\|_{B_{p,r}^s } \le \liminf_{n\rightarrow\infty}\|u^{(n)}\|_{B_{p,r}^s}.$$
(v)$B_{2,2}^s$ is coincides with the Sobolev space $H^s$.
\end{proposition}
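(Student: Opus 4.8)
The plan is to reduce each of the five properties to a standard tool of Littlewood--Paley theory attached to the dyadic decomposition of Proposition 2.1, treating them in increasing order of difficulty.

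I would dispose of (v) and (iv) first. For (v) the argument is pure Plancherel: since $\Delta_j u=\varphi(2^{-j}D)u$ has Fourier support in the annulus $2^j\mathcal{C}$, on that support the weight $(1+|\xi|^2)^{s/2}$ is comparable to $2^{js}$ with constants depending only on $s$, so combining Plancherel with the frame bounds $\tfrac12\le\chi^2+\sum_{j\ge0}\varphi^2(2^{-j}\cdot)\le1$ of Proposition 2.1 yields $\|u\|_{H^s}^2\simeq\sum_{j\ge-1}2^{2js}\|\Delta_j u\|_{L^2}^2=\|u\|_{B_{2,2}^s}^2$. For (iv) I would use Fatou's lemma twice: each $\Delta_j$ is continuous on $\mathcal{S}'$, so $\Delta_j u^{(n)}\to\Delta_j u$ and hence $\|\Delta_j u\|_{L^p}\le\liminf_n\|\Delta_j u^{(n)}\|_{L^p}$; inserting this into the weighted $\ell^r$ sum and applying Fatou once more on the series gives the stated $\liminf$ inequality.

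For the embedding (ii) the engine is Bernstein's inequality: a distribution with Fourier support in $2^j\mathcal{C}$ satisfies $\|\Delta_j u\|_{L^{p_2}}\lesssim2^{jd(1/p_1-1/p_2)}\|\Delta_j u\|_{L^{p_1}}$ when $p_1\le p_2$. Multiplying by $2^{j(s-d(1/p_1-1/p_2))}$, taking the $\ell^{r_2}$ norm, and using $\ell^{r_1}\hookrightarrow\ell^{r_2}$ for $r_1\le r_2$ produces the continuous embedding. The local compactness when $s_1<s_2$ is a Rellich-type argument: on a fixed ball I would combine the uniform high-frequency smallness furnished by the gain $s_2-s_1>0$ with a Fr\'echet--Kolmogorov (Arzel\`a--Ascoli) compactness criterion to extract a convergent subsequence from any bounded sequence. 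Property (i) is then handled by noting that $S_N u\to u$ in $B_{p,r}^s$ whenever $r<\infty$, since the tail $\sum_{j\ge N}2^{rjs}\|\Delta_j u\|_{L^p}^r\to0$, and approximating the band-limited $S_N u$ by Schwartz functions through spatial truncation and mollification; the finiteness of $r$ is essential at this step.

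The main work, and the step I expect to be the principal obstacle, is the algebra property (iii). I would invoke Bony's decomposition $uv=T_uv+T_vu+R(u,v)$ into the two paraproducts and the remainder, and estimate each piece through the spectral localization of its dyadic blocks together with Bernstein's inequality. The paraproduct $T_uv$ is controlled by $\|u\|_{L^\infty}\|v\|_{B_{p,r}^s}$, and the remainder $R(u,v)$ is where the hypothesis $s>0$ enters, being exactly what is needed to sum the resulting geometric series; adding the contributions gives $\|uv\|_{B_{p,r}^s}\lesssim\|u\|_{L^\infty}\|v\|_{B_{p,r}^s}+\|v\|_{L^\infty}\|u\|_{B_{p,r}^s}$, so $B_{p,r}^s\cap L^\infty$ is an algebra. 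The equivalence with $s>d/p$, or $s=d/p$ together with $r=1$, then follows from the sharp embedding $B_{p,r}^s\hookrightarrow L^\infty$, valid precisely under those conditions---the endpoint requiring $r=1$ so that $\sum_j\|\Delta_j u\|_{L^\infty}\lesssim\sum_j2^{jd/p}\|\Delta_j u\|_{L^p}$ converges---after which the algebra estimate above with $L^\infty$ replaced by $B_{p,r}^s$ closes the equivalence.
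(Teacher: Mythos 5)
The paper offers no proof of this proposition at all---it is recalled from reference [1] (Bahouri--Chemin--Danchin) as a preliminary, so there is nothing internal to compare against; your sketch is precisely the standard argument from that cited reference (Plancherel with the frame bounds for (v), block-wise Fatou for (iv), Bernstein plus $\ell^{r_1}\hookrightarrow\ell^{r_2}$ for (ii), low-frequency truncation $S_Nu$ plus mollification for (i), Bony decomposition for (iii)), and it is correct in outline. Two small points: density in (i) also requires $p<\infty$, not just $r<\infty$ (the paper's ``$p,r\le\infty$'' is a typo for $p,r<\infty$), and in (iii) you establish ``embedding $\Rightarrow$ algebra'' and the sharpness of the embedding, but not the converse direction ``algebra $\Rightarrow$ embedding into $L^\infty$'', which the stated equivalence also asserts.
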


\begin{lemma}
$[1]$(1-D Moser-type estimates). Assume that $1 \le p,r \le +\infty$, the following estimates hold:\\
(i) for $s>0, \|fg\|_{B_{p,r}^s} \le C\left(\|f\|_{B_{p,r}^s}\|g\|_{L^{\infty}}+\|g\|_{B_{p,r}^s}\|f\|_{\infty}\right);$\\
(ii) for $s_1 \le \tfrac{1}{p}, s_2 > \tfrac{1}{p}(s_2 \ge \tfrac{1}{p}\ if\ r=1)$,and $s_1+s_2>0$,
$$\|fg\|_{B_{p,r}^{s_1}} \le C\|f\|_{B_{p,r}^{s_1}}\|g\|_{B_{p,r}^{s_2}},$$
where the constanc C is independent of f and g.
\end{lemma}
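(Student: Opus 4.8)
The plan is to base both estimates on Bony's paraproduct calculus, building the needed block estimates directly from the Littlewood--Paley decomposition of Proposition 2.1 and Bernstein's inequality. I would first split the product through the Bony decomposition
\[
fg = T_f g + T_g f + R(f,g),\qquad T_f g:=\sum_j S_{j-1}f\,\Delta_j g,\quad R(f,g):=\sum_j\sum_{|i|\le1}\Delta_j f\,\Delta_{j+i}g,
\]
and estimate the three pieces separately. The two structural inputs are the spectral localizations: each block $S_{j-1}f\,\Delta_j g$ of a paraproduct is supported in a dyadic annulus $\sim 2^j\mathcal C$, so $\Delta_q(T_f g)$ only collects indices with $|j-q|\le N_0$; whereas each block $\Delta_j f\,\Delta_{j+i}g$ of the remainder is supported in a ball $B(0,C2^j)$, so $\Delta_q(R(f,g))$ only collects $j\ge q-N_0$. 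Bernstein's inequality and the definition of the $B^s_{p,r}$ norm then turn these block bounds into $\ell^r$ estimates with the weight $2^{qs}$.

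For (i) the two paraproducts are symmetric and valid for every $s\in\mathbb R$: from $\|S_{j-1}f\|_{L^\infty}\lesssim\|f\|_{L^\infty}$ together with the annulus localization I get $\|T_f g\|_{B^s_{p,r}}\lesssim\|f\|_{L^\infty}\|g\|_{B^s_{p,r}}$ and likewise $\|T_g f\|_{B^s_{p,r}}\lesssim\|g\|_{L^\infty}\|f\|_{B^s_{p,r}}$. The hypothesis $s>0$ enters only in the remainder: writing $c_j:=2^{js}\|\Delta_j g\|_{L^p}$ and placing $f$ in $L^\infty$, $g$ in $L^p$ gives, up to the harmless shift $|i|\le1$,
\[
2^{qs}\|\Delta_q R(f,g)\|_{L^p}\lesssim\|f\|_{L^\infty}\sum_{j\ge q-N_0}2^{(q-j)s}c_j,
\]
and since $s>0$ the kernel $(2^{ks}\mathbf 1_{k\le N_0})_k$ lies in $\ell^1$, so Young's convolution inequality bounds the $\ell^r$ norm by $\|f\|_{L^\infty}\|g\|_{B^s_{p,r}}$. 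Summing the three contributions yields (i).

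For (ii) the condition $s_2>\tfrac1p$ (or $s_2\ge\tfrac1p$ when $r=1$) gives $g\in L^\infty$ with $\|g\|_{L^\infty}\lesssim\|g\|_{B^{s_2}_{p,r}}$ through the embedding $B^{s_2}_{p,r}\hookrightarrow L^\infty$ of Proposition 2.2(iii), so $T_g f$ is handled exactly as in (i) and gives $\|T_g f\|_{B^{s_1}_{p,r}}\lesssim\|g\|_{B^{s_2}_{p,r}}\|f\|_{B^{s_1}_{p,r}}$. For $T_f g$, since $s_1\le\tfrac1p$ I cannot place $f$ in $L^\infty$; instead I would use $\|S_{j-1}f\|_{L^\infty}\lesssim 2^{j(1/p-s_1)}\|f\|_{B^{s_1}_{p,r}}$ (Bernstein plus the embedding $B^{s_1}_{p,r}\hookrightarrow B^{s_1-1/p}_{\infty,\infty}$ of Proposition 2.2(ii)), which for $s_1<\tfrac1p$ yields $\|T_f g\|_{B^{s_1+s_2-1/p}_{p,r}}\lesssim\|f\|_{B^{s_1}_{p,r}}\|g\|_{B^{s_2}_{p,r}}$; as $s_2\ge\tfrac1p$ forces $s_1+s_2-\tfrac1p\ge s_1$, the regularity embedding $B^{s_1+s_2-1/p}_{p,r}\hookrightarrow B^{s_1}_{p,r}$ closes this term. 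For the remainder I would Hölder in $L^{p/2}$ with no Bernstein loss, $\|\Delta_j f\,\Delta_{j+i}g\|_{L^{p/2}}\le\|\Delta_j f\|_{L^p}\|\Delta_{j+i}g\|_{L^p}$, which under $s_1+s_2>0$ gives $\|R(f,g)\|_{B^{s_1+s_2}_{p/2,r/2}}\lesssim\|f\|_{B^{s_1}_{p,r}}\|g\|_{B^{s_2}_{p,r}}$ by the convolution argument of (i) (the product sequence being controlled in $\ell^{r/2}$ by Hölder); the chain $B^{s_1+s_2}_{p/2,r/2}\hookrightarrow B^{s_1+s_2-1/p}_{p,r}\hookrightarrow B^{s_1}_{p,r}$ then finishes (ii).

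I expect the remainder to be the main obstacle, for two reasons. First, it is the only place where the sign condition ($s>0$ in (i), $s_1+s_2>0$ in (ii)) is unavoidable, and the gain must be realized through the auxiliary lower-integrability space $B^{s_1+s_2}_{p/2,r/2}$ together with the sharp embedding, rather than through a lossy direct $L^p$ estimate that would spuriously demand $s_1+s_2>\tfrac1p$. Second, the borderline configurations are delicate: when $s_1=\tfrac1p$ the term $T_f g$ sits exactly at regularity $s_1-\tfrac1p=0$ for $f$, so the partial sums $\sum_{k\le j-2}\|\Delta_k f\|_{L^\infty}$ no longer stay bounded, and convergence is recovered either from the strict inequality $s_2>\tfrac1p$ (whose exponential decay absorbs the resulting polynomial loss) or, when $s_2=\tfrac1p$, from the hypothesis $r=1$ that restores $\ell^1$ summability and lets one place $f$ itself in $L^\infty$. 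Tracking these endpoints, and verifying that the index arithmetic in $B^{s_1+s_2}_{p/2,r/2}\hookrightarrow B^{s_1+s_2-1/p}_{p,r}$ is consistent with Proposition 2.2(ii), is where the bookkeeping must be done carefully.
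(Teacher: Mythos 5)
You should first note what you are comparing against: the paper never proves Lemma 2.3 at all --- it is quoted from reference [1] (Bahouri--Chemin--Danchin) as a preliminary --- so the only meaningful benchmark is the standard proof in that reference, which your Bony-decomposition argument faithfully reproduces. Within that framework, your part (i) is correct for all $1\le p,r\le\infty$, and your part (ii), including the careful handling of the endpoints $s_1=\tfrac1p$ (polynomial loss absorbed by the exponential gain from $s_2>\tfrac1p$) and $s_2=\tfrac1p$, $r=1$ (where $B^{1/p}_{p,1}\hookrightarrow L^\infty$ lets you treat $T_fg$ as in (i)), is correct whenever $p\ge 2$ --- in particular for $p=2$, which is the only case this paper ever invokes (all of its estimates live in $B^{1/2}_{2,1}$, $B^{\pm 1/2}_{2,\infty}$ and $H^s=B^s_{2,2}$).

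The genuine gap is in your remainder estimate for (ii) when $1\le p<2$. There $p/2<1$, so $L^{p/2}$ is only quasi-normed, the blocks $\Delta_q$ (convolution operators) are no longer bounded on it, the space $B^{s_1+s_2}_{p/2,r/2}$ and the embedding $B^{s_1+s_2}_{p/2,r/2}\hookrightarrow B^{s_1+s_2-1/p}_{p,r}$ fall outside Proposition 2.2 (whose indices are at least $1$), and so ``the convolution argument of (i)'' cannot be run. This is not a bookkeeping issue you can fix: the lemma as transcribed in the paper is actually false for $p<2$. The statement in [1] (Corollary 2.86 there) carries the additional hypothesis $s_1+s_2>d\max\bigl(0,\tfrac2p-1\bigr)$, which the paper's version silently drops. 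Concretely, for $d=1$, $p=1$, take $f=\delta_0'\in B^{-1}_{1,\infty}$ and $g\in B^{3/2}_{1,\infty}$ odd, equal to $\mathrm{sgn}(x)\,|x|^{1/2}$ near the origin: then $s_1\le\tfrac1p$, $s_2>\tfrac1p$ and $s_1+s_2=\tfrac12>0$, yet the remainder series $\sum_j\sum_{|i|\le1}\Delta_jf\,\Delta_{j+i}g$ diverges in $\mathcal S'$, so no bilinear bound can hold. The honest repair is either to restrict (ii) to $p\ge2$ (which suffices for everything in this paper) or to add the hypothesis $s_1+s_2>\max\bigl(0,\tfrac2p-1\bigr)$, in which case the remainder for $p<2$ is estimated in an $L^1$-based space via $\|\Delta_jf\,\Delta_{j+i}g\|_{L^1}\le\|\Delta_jf\|_{L^p}\|\Delta_{j+i}g\|_{L^{p'}}$ and Bernstein, paying exactly the loss $\tfrac2p-1$ that the strengthened hypothesis provides.
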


\begin{lemma}
$[1]$For $s \ge 0$, the following estimates hold:
$$\|fg\|_{H^s} \le C\left(\|f\|_{H^s}\|g\|_{L^{\infty}}+\|f\|_{L^{\infty}}\|g\|_{H^s}\right),$$
$$\|f \partial_x g\|_{H^s} \le C\left(\|f\|_{H^{s+1}}\|g\|_{L^{\infty}}+\|f\|_{L^{\infty}}\|\partial_x g\|_{H^s}\right),$$
where the constant C is independent of f and g.
\end{lemma}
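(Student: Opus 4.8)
The plan is to prove both inequalities by Bony's paraproduct decomposition, exploiting the almost-orthogonality recorded in Proposition 2.1 together with the identification $H^s=B^s_{2,2}$ from Proposition 2.2(v). (The first inequality is in fact the $p=r=2$ case of the Moser estimate in Lemma 2.3(i), so one could simply invoke that; I describe a self-contained argument that also yields the second inequality.) Since the case $s=0$ of each estimate is immediate from $\|fg\|_{L^2}\le\|f\|_{L^\infty}\|g\|_{L^2}$ and $\|f\partial_x g\|_{L^2}\le\|f\|_{L^\infty}\|\partial_x g\|_{L^2}$, I would treat $s>0$ throughout.

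First I would write $fg=T_fg+T_gf+R(f,g)$, where $T_fg=\sum_q S_{q-1}f\,\Delta_q g$ is the paraproduct and $R(f,g)=\sum_{|q-q'|\le1}\Delta_qf\,\Delta_{q'}g$ is the remainder. For the two paraproducts, each summand $S_{q-1}f\,\Delta_q g$ is spectrally supported in the annulus $2^q\widetilde{\mathcal{C}}$, so the orthogonality of Proposition 2.1 gives $\|T_fg\|_{H^s}^2\lesssim\sum_q 2^{2qs}\|S_{q-1}f\|_{L^\infty}^2\|\Delta_q g\|_{L^2}^2$. Because $S_{q-1}$ is a convolution with an $L^1$ kernel of mass independent of $q$, it is bounded on $L^\infty$ uniformly, whence $\|T_fg\|_{H^s}\lesssim\|f\|_{L^\infty}\|g\|_{H^s}$ and symmetrically $\|T_gf\|_{H^s}\lesssim\|g\|_{L^\infty}\|f\|_{H^s}$. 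The remainder is the delicate term: each block $\Delta_qf\,\Delta_{q'}g$ is supported only in a ball $B(0,C2^q)$, so $\Delta_jR(f,g)$ receives contributions from $q\ge j-N_0$; placing $L^\infty$ on one factor leaves a discrete convolution whose kernel $2^{(j-q)s}\mathbb{1}_{q\ge j-N_0}$ is summable precisely because $s>0$, and Young's inequality for series then gives $\|R(f,g)\|_{H^s}\lesssim\|f\|_{L^\infty}\|g\|_{H^s}$. Summing the three contributions yields the first estimate.

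For the second estimate the new feature is that the derivative must be redistributed so that the top-order factor $\|f\|_{H^{s+1}}$ pairs with $\|g\|_{L^\infty}$ rather than with $\|\partial_x g\|_{L^\infty}$. Decomposing $f\partial_x g=T_f(\partial_x g)+T_{\partial_x g}f+R(f,\partial_x g)$, the term $T_f(\partial_x g)$ is already harmless: the annulus estimate produces exactly $\|f\|_{L^\infty}\|\partial_x g\|_{H^s}$. For $T_{\partial_x g}f$ I would invoke the Leibniz identity $T_{\partial_x g}f=\partial_x(T_gf)-T_g(\partial_x f)$; the first piece loses one derivative through the spectral localization, $\|\partial_x(T_gf)\|_{H^s}\lesssim\|T_gf\|_{H^{s+1}}\lesssim\|g\|_{L^\infty}\|f\|_{H^{s+1}}$, while the second is directly $\lesssim\|g\|_{L^\infty}\|\partial_x f\|_{H^s}\le\|g\|_{L^\infty}\|f\|_{H^{s+1}}$. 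The remainder $R(f,\partial_x g)$ is again controlled by the ball-localization argument, now keeping $\|f\|_{L^\infty}$ on $f$ and $\|\partial_x g\|_{H^s}$ on the derivative factor, the convolution converging for $s>0$ as before. Collecting terms produces the claimed bound.

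The main obstacle is the remainder term in both estimates: it is the only place where the spectral supports are balls rather than annuli, so orthogonality fails and one must instead exploit the $s>0$ summability of the convolution kernel — which is also what forces the separate (trivial) treatment of $s=0$. The secondary technical point is the derivative-shifting identity $T_{\partial_x g}f=\partial_x(T_gf)-T_g(\partial_x f)$, which is exactly what converts a bound involving the uncontrolled quantity $\|\partial_x g\|_{L^\infty}$ into the desired factor $\|g\|_{L^\infty}\|f\|_{H^{s+1}}$.
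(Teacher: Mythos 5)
The paper never proves this lemma: it is quoted with the citation $[1]$ (Bahouri--Chemin--Danchin), so there is no in-paper argument to compare yours against, and the right benchmark is the textbook proof in that reference. Your proof is correct and is essentially that standard argument: Bony decomposition $fg=T_fg+T_gf+R(f,g)$, almost-orthogonality of the annuli from Proposition 2.1 for the two paraproducts (valid for every $s\in\mathbb{R}$), the $s>0$ summability of the kernel $2^{(j-q)s}\mathbf{1}_{\{q\ge j-N_0\}}$ together with Young's inequality for series to handle the ball-supported remainder, and the trivial H\"older bound at $s=0$. The derivative-shifting identity $T_{\partial_x g}f=\partial_x(T_gf)-T_g(\partial_x f)$ is exact, not merely true up to commutators, because $S_{q-1}$ and $\Delta_q$ are Fourier multipliers and hence commute with $\partial_x$; this is indeed the correct mechanism for trading the uncontrolled quantity $\|\partial_x g\|_{L^\infty}$ for $\|g\|_{L^\infty}\|f\|_{H^{s+1}}$ in the second estimate, and it is where your write-up adds genuine value beyond citing Lemma 2.3(i) for the first inequality. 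Two small points you should make explicit in a full write-up: (i) in the nonhomogeneous calculus the sums run over $q\ge -1$ and the low-frequency paraproduct blocks need the convention $S_{q-1}f=0$ for $q\le 0$ (or a separate, trivial treatment), otherwise the spectral-support claims for $S_{q-1}f\,\Delta_q g$ are not literally those of Proposition 2.1; (ii) the remainder estimate uses $\|\Delta_q f\|_{L^\infty}\le C\|f\|_{L^\infty}$ uniformly in $q$, which holds because $\Delta_q$ is convolution with a kernel of $q$-independent $L^1$ mass --- the same fact you invoked for $S_{q-1}$, and worth stating once for both.
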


\begin{lemma}
$[1]$(Osgood lemma) Let $\rho$ be a measurable function from $[t_0,T]$ to $[0,a]$, $\gamma$ is a locally integrable function from $[0,T]$ to $\mathbb{R}^+$, and $\mu$ is a continuous and nondecreasing function from $[0,a]$ to $\mathbb{R}^+$. Assume for some nonnegative real nunber $c$, the function $\rho$ satisfies
$$\rho(t) \le c+\int_{t_0}^t\gamma(s)\mu(\rho(s))ds,\ \ \ for \ \ \ a.e. \ \ \ t\in[t_0,T].$$
If $c$ is positive, then we have for a.e. $t\in[t_0,T]$
$$-\mathcal{N}(\rho(t))+\mathcal{N}(c) \le \int_{t_0}^t\gamma(s)ds \ \ \ with \ \ \ \mathcal{N}(x)=\int_x^a\frac{d\tau}{\mu(\tau)}.$$
If $c=0$ and $\mu$ satisfies $\displaystyle\int_0^a\frac{d\tau}{\mu(\tau)} = +\infty$, then $\rho=0$ a.e.
\end{lemma}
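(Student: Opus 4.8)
The plan is to treat this as an ODE comparison argument: I would replace the integral inequality satisfied by $\rho$ by an \emph{equality} defining a majorant, differentiate it, and then recognize the resulting differential inequality as the derivative of $\mathcal{N}$ composed with the majorant. The degenerate case $c=0$ I would reduce to the case $c>0$ by a limiting argument that exploits the divergence hypothesis $\int_0^a d\tau/\mu(\tau)=+\infty$.

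For the case $c>0$, first I would set
$$R(t)=c+\int_{t_0}^t\gamma(s)\mu(\rho(s))\,ds,$$
so that $\rho(t)\le R(t)$ for a.e.\ $t$ and $R(t_0)=c>0$. Since $\rho$ takes values in $[0,a]$ and $\mu$ is nondecreasing, one has $\mu(\rho(s))\le\mu(a)$, whence $s\mapsto\gamma(s)\mu(\rho(s))$ is locally integrable; therefore $R$ is absolutely continuous with $R'(t)=\gamma(t)\mu(\rho(t))$ for a.e.\ $t$. The key monotonicity step is that $\rho(t)\le R(t)$ together with $\mu$ nondecreasing gives $\mu(\rho(t))\le\mu(R(t))$, so that, using $\mathcal{N}'(x)=-1/\mu(x)$,
$$-\frac{d}{dt}\mathcal{N}(R(t))=\frac{R'(t)}{\mu(R(t))}=\frac{\gamma(t)\mu(\rho(t))}{\mu(R(t))}\le\gamma(t).$$
Integrating from $t_0$ to $t$ yields $-\mathcal{N}(R(t))+\mathcal{N}(c)\le\int_{t_0}^t\gamma(s)\,ds$. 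Finally, since $\mathcal{N}$ is nonincreasing and $\rho(t)\le R(t)$, we have $\mathcal{N}(\rho(t))\ge\mathcal{N}(R(t))$, and substituting this gives the claimed inequality.

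For the case $c=0$, I would apply the result just proved with $c$ replaced by an arbitrary $\varepsilon>0$: since $0<\varepsilon$, the function $\rho$ also satisfies $\rho(t)\le\varepsilon+\int_{t_0}^t\gamma(s)\mu(\rho(s))\,ds$, so
$$\mathcal{N}(\varepsilon)\le\mathcal{N}(\rho(t))+\int_{t_0}^t\gamma(s)\,ds\qquad\text{for a.e.\ }t.$$
Fix such a $t$ and suppose $\rho(t)>0$; then $\mathcal{N}(\rho(t))=\int_{\rho(t)}^a d\tau/\mu(\tau)$ is finite (as $1/\mu$ is bounded on $[\rho(t),a]$) and $\int_{t_0}^t\gamma\,ds$ is finite, so the right-hand side is bounded independently of $\varepsilon$, whereas $\mathcal{N}(\varepsilon)\to\int_0^a d\tau/\mu(\tau)=+\infty$ as $\varepsilon\to0^+$ --- a contradiction. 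Hence $\rho(t)=0$ for a.e.\ $t$.

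The main obstacle is not conceptual but measure-theoretic bookkeeping: one must justify the absolute continuity of $R$ and the a.e.\ validity of $R'=\gamma\,\mu(\rho)$, and one must take care that the majorant $R(t)$ may exceed $a$, whereas $\mathcal{N}$ is a priori only defined on $[0,a]$. I would circumvent the latter by extending $\mu$ to $[0,\infty)$ via $\mu(x):=\mu(a)$ for $x>a$ (preserving continuity and monotonicity) and extending $\mathcal{N}$ accordingly; the final inequality only involves $\mathcal{N}$ evaluated at $\rho(t)\le a$ and at $c\le a$, so it is unaffected by the extension. The limiting argument in the case $c=0$ additionally requires choosing a single full-measure set of times $t$ on which the inequality holds simultaneously for a sequence $\varepsilon_n\to0$, which is routine.
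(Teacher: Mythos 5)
Your proof is correct. Note that the paper does not prove this lemma at all: it is quoted verbatim as Lemma 2.5 from the reference [1] (Bahouri--Chemin--Danchin), so there is no in-paper argument to compare against. Your comparison argument --- majorizing $\rho$ by the absolutely continuous function $R(t)=c+\int_{t_0}^t\gamma\,\mu(\rho)\,ds$, using monotonicity of $\mu$ to get $-\tfrac{d}{dt}\mathcal{N}(R(t))\le\gamma(t)$, and handling $c=0$ by letting $\varepsilon\to 0^+$ against the divergence of $\int_0^a d\tau/\mu(\tau)$ --- is essentially the standard proof given in that reference, and your attention to the measure-theoretic points (absolute continuity of $R$, extension of $\mu$ beyond $a$, a common null set for a sequence $\varepsilon_n\to 0$) is sound.
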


Now we introduce a priori estimates for the following transport equation.
\begin{equation}
\begin{cases}
\partial_t f+v\cdot \nabla f =g,\\
f|_{t=0}=f_0.
\end{cases}
\end{equation}

\begin{lemma}
$[1]$(A priori estimates in Besov spaces).Let $1 \le p \le p_1 \le \infty$, $1 \le r \le \infty$, $s \ge -d\cdot min(\tfrac{1}{p_1},\tfrac{1}{p'})$. For the solution $ f \in L^{\infty}([0,T];B_{p,r}^s(\mathbb{R}^d))$ of (3) with velocity $v, \nabla v \in  L^1([0,T];B_{p,r}^s(\mathbb{R}^d)\cap L^{\infty}(\mathbb{R}^d))$, initial data $f_0 \in B_{p,r}^s(\mathbb{R}^d)$ and $g\in L^1([0,T];B_{p,r}^s(\mathbb{R}^d))$, we have
$$\|f(t)\|_{B_{p,r}^s} \le \|f_0\|_{B_{p,r}^s}+\int_0^t\left(\|g(t')\|_{{B_{p,r}^s}}+CV'_{p_1}(t')\|f(t')\|_{{B_{p,r}^s}}\right)dt',$$
$$\|f(t)\|_{L_t^{\infty}(B_{p,r}^s)} \le \left(\|f_0\|_{B_{p,r}^s}+\int_0^t\exp(-CV_{p_1}(t'))\|g(t')\|_{{B_{p,r}^s}}dt'\right)\exp(CV_{p_1}),$$
where
\begin{equation*}
V_{p_1}(t)=\begin{cases}
\displaystyle\int_0^t\|\nabla v\|_{B_{p_1,\infty}^{\frac{d}{p_1}} \cap L^{\infty}} &,\  if\ \ \  s<1+\tfrac{d}{p_1},\\
\displaystyle\int_0^t\|\nabla v\|_{B_{p_1,r}^{s-1}}&,\  if\ \ \  s>1+\tfrac{d}{p_1}\  or\  s=1+\tfrac{d}{p_1}, r=1,\\
\displaystyle\int_0^t\|\nabla v\|_{B_{p_1,1}^{\frac{d}{p_1}}}  &,\  if\ \ \  s=-d\cdot \min\{\frac{1}{p'},\frac{1}{p_1}\}, r=\infty.
\end{cases}
\end{equation*}
\end{lemma}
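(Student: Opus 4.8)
The plan is to localize the transport equation (3) in frequency by the Littlewood–Paley decomposition, run an $L^p$ energy estimate on each dyadic piece, and then close the argument with a commutator estimate together with a Gronwall-type inequality. First I would apply the block $\Delta_j$ to (3); writing $f_j=\Delta_j f$ and commuting $\Delta_j$ past the transport operator gives the localized equation
$$\partial_t f_j+v\cdot\nabla f_j=\Delta_j g+R_j,\qquad R_j:=[v\cdot\nabla,\Delta_j]f=v\cdot\nabla\Delta_j f-\Delta_j(v\cdot\nabla f).$$
This is again a transport equation, now forced by the localized source $\Delta_j g$ and the commutator remainder $R_j$.

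Next I would establish the scalar $L^p$ estimate for a transport equation. Multiplying the $f_j$-equation by $|f_j|^{p-2}f_j$, integrating in $x$, and integrating the transport term by parts produces $\mathrm{div}\,v$, so that
$$\frac{d}{dt}\|f_j\|_{L^p}\le\tfrac{1}{p}\|\mathrm{div}\,v\|_{L^\infty}\|f_j\|_{L^p}+\|\Delta_j g\|_{L^p}+\|R_j\|_{L^p}.$$
For the endpoint $p=\infty$ one argues directly along the characteristics of the flow of $v$. Integrating in time, multiplying by $2^{js}$, and taking the $\ell^r$ norm over $j$ then reduces everything to controlling the commutator sequence $\bigl(2^{js}\|R_j\|_{L^p}\bigr)_{j\ge-1}$.

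The heart of the matter, and the step I expect to be the main obstacle, is exactly this commutator estimate. Using the Bony decomposition of $v\cdot\nabla f$ into paraproducts and a remainder, together with the support and convolution properties of the blocks recorded in Proposition 2.1, one shows
$$\Bigl\|\bigl(2^{js}\|R_j\|_{L^p}\bigr)_j\Bigr\|_{\ell^r}\le C\,V'_{p_1}(t)\,\|f\|_{B_{p,r}^s},$$
where the precise norm of $\nabla v$ entering $V'_{p_1}$ depends on the position of $s$ relative to the critical exponent $1+\tfrac{d}{p_1}$. This dependence is the source of the three cases in the definition of $V_{p_1}$, and the endpoint regimes $s=1+\tfrac{d}{p_1}$ and $s=-d\min\{\tfrac{1}{p'},\tfrac{1}{p_1}\}$ demand the most careful bookkeeping of the paraproduct terms.

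Finally, combining the summed commutator bound with the localized $L^p$ estimates yields the integral inequality
$$\|f(t)\|_{B_{p,r}^s}\le\|f_0\|_{B_{p,r}^s}+\int_0^t\Bigl(\|g(t')\|_{B_{p,r}^s}+C\,V'_{p_1}(t')\,\|f(t')\|_{B_{p,r}^s}\Bigr)\,dt',$$
which is the first asserted bound. The second, exponentially weighted estimate then follows by a Gronwall-type argument applied to $t\mapsto\|f(t)\|_{B_{p,r}^s}$ with integrating factor $\exp(-CV_{p_1}(t))$. One technical point to flag is that the whole computation is \emph{a priori}: to make it rigorous one first regularizes the data and coefficients, proves the estimate for the smooth solutions, and passes to the limit using the Fatou property of Proposition 2.2(iv).
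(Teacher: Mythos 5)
Your outline is correct: this lemma is not proved in the paper at all — it is quoted verbatim from reference [1] (Bahouri--Chemin--Danchin), and the argument you sketch (Littlewood--Paley localization of the transport equation, the $L^p$ estimate on each block picking up $\mathrm{div}\,v$, the Bony-decomposition commutator bound $\bigl\|\bigl(2^{js}\|R_j\|_{L^p}\bigr)_j\bigr\|_{\ell^r}\le C V'_{p_1}\|f\|_{B_{p,r}^s}$ with its case analysis in $s$, then Gronwall and a regularization/Fatou step) is precisely the proof given in that reference. So your proposal matches the source the paper relies on; the only part left schematic, as you yourself flag, is the commutator estimate, which is the genuinely technical ingredient there.
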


For $d=1$ and $0<s<1$, we have the following a priori estimates.

\begin{lemma}
$[1]$(A priori estimates in Sobolev spaces). Let $0<s\le1$. Assume that $ f \in L^{\infty}([0,T];H^s(\mathbb{R}))$ is the solution of(1) with velocity $v, \partial_x v \in  L^1([0,T];L^{\infty}(\mathbb{R}))$, initial data $f_0 \in H^s(\mathbb{R})$ and $g\in L^1([0,T];H^s(\mathbb{R}))$, we have
$$\|f(t)\|_{H^s} \le \|f_0\|_{H^s}+\int_0^t\left(\|g(t')\|_{H^s}+CV'(t')\|f(t')\|_{H^s}\right)dt',$$
$$\|f(t)\|_{H^s} \le \left(\|f_0\|_{H^s}+\int_0^t\exp(-CV(t'))\|g(t')\|_{{H^s}}dt'\right)\exp(CV(t)),$$
where $V(t)=\displaystyle\int_0^t\|v\|_{W^{1,\infty(\mathbb{R})}}d\tau$ and C is a constant depending only on s.
\end{lemma}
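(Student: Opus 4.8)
\emph{Proof idea.} Since $H^s(\mathbb{R})=B_{2,2}^s(\mathbb{R})$ by Proposition 2.2 (v), the statement is the Sobolev specialization of Lemma 2.6 with $d=1$ and $p=p_1=r=2$: indeed for $0<s\le1$ we have $s<1+\tfrac{d}{p_1}=\tfrac{3}{2}$, so Lemma 2.6 already delivers estimates of exactly the claimed shape, but with the velocity measured in the Besov norm $\int_0^t\|\partial_x v\|_{B_{2,\infty}^{1/2}\cap L^\infty}\,d\tau$ rather than in $V(t)=\int_0^t\|v\|_{W^{1,\infty}}\,d\tau$. The entire content of the lemma is thus to show that, in the low-regularity range $0<s\le1$, the velocity norm can be \emph{downgraded} to $W^{1,\infty}$. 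The plan is therefore to reproduce the Littlewood--Paley energy argument behind Lemma 2.6 and to replace the single commutator estimate in which the Besov velocity norm appears by a sharper one valid for $s\le1$.

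First I localize. Applying $\Delta_j$ to (3) yields
\begin{equation*}
\partial_t f_j+v\,\partial_x f_j=g_j+R_j,\qquad f_j:=\Delta_j f,\ \ g_j:=\Delta_j g,\ \ R_j:=[v\,\partial_x,\Delta_j]f.
\end{equation*}
Taking the $L^2$ inner product with $f_j$ and integrating the transport term by parts turns $\int v\,\partial_x f_j\,f_j\,dx$ into $-\tfrac12\int(\partial_x v)\,f_j^2\,dx$, which is bounded by $\tfrac12\|\partial_x v\|_{L^\infty}\|f_j\|_{L^2}^2$. After dividing by $\|f_j\|_{L^2}$ this gives, for each $j\ge-1$,
\begin{equation*}
\frac{d}{dt}\|f_j\|_{L^2}\le C\|\partial_x v\|_{L^\infty}\|f_j\|_{L^2}+\|g_j\|_{L^2}+\|R_j\|_{L^2}.
\end{equation*}
The time differentiation is justified after a standard mollification of the data, passing to the limit at the end via the Fatou property of Proposition 2.2 (iv).

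The crux is the commutator estimate
\begin{equation*}
\Big\|\big(2^{js}\|R_j\|_{L^2}\big)_{j\ge-1}\Big\|_{\ell^2}\le C\,\|v\|_{W^{1,\infty}}\,\|f\|_{H^s},\qquad 0<s\le1.
\end{equation*}
I would derive it from the kernel representation $R_j(x)=\int h_j(x-y)\big(v(x)-v(y)\big)\partial_y f(y)\,dy$, where $h_j(z)=2^{j}h(2^{j}z)$: the mean-value bound $|v(x)-v(y)|\le\|\partial_x v\|_{L^\infty}|x-y|$ absorbs the factor $|x-y|$ into the rescaled kernel $z\,h_j(z)$, whose $L^1$ norm is $O(2^{-j})$; heuristically the frequency localization of $\Delta_j$ confines the relevant frequencies of $f$ to size $\sim 2^j$, so the surviving $\partial_x f$ contributes a factor $\sim 2^j$ that cancels the $2^{-j}$ and leaves $2^{js}\|R_j\|_{L^2}\lesssim\|\partial_x v\|_{L^\infty}\,2^{js}\|\widetilde\Delta_j f\|_{L^2}$ with $\widetilde\Delta_j$ a fattened block, summable in $\ell^2$ to $\|f\|_{H^s}$. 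To make the summation over the frequencies of $v$ rigorous I would split $v\,\partial_x f$ by Bony's paraproduct decomposition; the point is that exactly when $s\le1$ every interaction closes using only $\|\partial_x v\|_{L^\infty}$ (and $\|v\|_{L^\infty}$ for the low block $\Delta_{-1}$), so no higher Besov norm of $v$ is needed. This step, and in particular the borderline case $s=1$, is where the main difficulty lies: the sub-unit regularity is precisely what keeps the one-derivative gain from the commutator sufficient.

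Finally I multiply the differential inequality by $2^{js}$, take the $\ell^2$ norm in $j$ (using Cauchy--Schwarz to bound $\tfrac{d}{dt}\|f\|_{H^s}$), and insert the commutator bound together with $\|(2^{js}\|g_j\|_{L^2})_j\|_{\ell^2}=\|g\|_{H^s}$ and $\|\partial_x v\|_{L^\infty}\le\|v\|_{W^{1,\infty}}$; writing $V'(t)=\|v\|_{W^{1,\infty}}$ and integrating in time produces the first asserted inequality. The second, exponential inequality then follows by Gronwall's lemma applied to $t\mapsto\|f(t)\|_{H^s}$ with integrating factor $\exp(-CV(t))$. I expect no serious obstruction beyond the commutator estimate; the energy step and the Gronwall step are routine once the $\ell^2(2^{js})$-summability of $\|R_j\|_{L^2}$ against $\|v\|_{W^{1,\infty}}\|f\|_{H^s}$ is in hand.
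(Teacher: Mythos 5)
First, a point of comparison: the paper contains no proof of this statement at all --- Lemma 2.7 is imported verbatim from reference [1] (Bahouri--Chemin--Danchin), so your proposal can only be measured against the standard proof in that source. Your architecture (dyadic localization of the transport equation, a per-block $L^2$ energy estimate with the transport term integrated by parts, a commutator bound using only the Lipschitz norm of $v$, then summation and Gronwall) is exactly that proof's architecture, and it does work --- but only in the range $0<s<1$.

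The genuine gap is your claim that the commutator estimate $\bigl\|\bigl(2^{js}\|R_j\|_{L^2}\bigr)_j\bigr\|_{\ell^2}\le C\|v\|_{W^{1,\infty}}\|f\|_{H^s}$ holds up to and including $s=1$. After Bony decomposition, the dangerous interaction is $\Delta_jT_{\partial_xf}v=\sum_{|j'-j|\le 4}\Delta_j\bigl(S_{j'-1}(\partial_xf)\,\Delta_{j'}v\bigr)$. Using $\|\Delta_{j'}v\|_{L^\infty}\lesssim 2^{-j'}\|\partial_xv\|_{L^\infty}$ one gets $2^{js}\|\Delta_jT_{\partial_xf}v\|_{L^2}\lesssim\|\partial_xv\|_{L^\infty}\sum_{k\le j+2}2^{-(j-k)(1-s)}a_k$ with $a_k:=2^{ks}\|\Delta_kf\|_{L^2}\in\ell^2$. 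For $s<1$ this is convolution with the summable kernel $(2^{-l(1-s)})_{l\ge -2}$, and Young's inequality closes the estimate with a constant of order $(1-s)^{-1}$ --- harmless, since $C$ may depend on $s$. At $s=1$ the kernel degenerates to the partial-sum operator $(a_k)\mapsto\bigl(\sum_{k\le j}a_k\bigr)_j$, which is not bounded on $\ell^2$; it does not even map $\ell^2$ into $\ell^\infty$ (take $a_k=1/k$). Nor is this an artifact of the bookkeeping: for a localized triangle wave $v$ (Lipschitz, with $\|\Delta_jv\|_{L^\infty}\sim 2^{-j}$ for every $j$ and no lacunary structure) and $f$ with $2^k\|\Delta_kf\|_{L^2}=1/k$, this term genuinely violates the claimed $\ell^2$ bound, and the remaining pieces of the commutator are each $O(c_j2^{-j})\|v\|_{W^{1,\infty}}\|f\|_{H^1}$ with $(c_j)\in\ell^2$, so they cannot cancel it. In other words, your phrase ``sub-unit regularity is precisely what keeps the one-derivative gain sufficient'' is correct --- and at $s=1$ the regularity is not sub-unit, which is exactly where your argument breaks. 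The lemma is nevertheless true at $s=1$, but by a different, elementary argument that you should splice in as a separate case: $w=\partial_xf$ solves $\partial_tw+v\,\partial_xw=\partial_xg-(\partial_xv)\,w$, and the plain $L^2$ energy estimate (integration by parts turns $\int v\,w\,\partial_xw\,dx$ into $-\tfrac12\int(\partial_xv)w^2\,dx$) yields $\tfrac{d}{dt}\|f\|_{H^1}\le\|g\|_{H^1}+C\|\partial_xv\|_{L^\infty}\|f\|_{H^1}$, after which Gronwall gives both stated inequalities. With that case split --- your commutator argument for $0<s<1$, direct differentiation for $s=1$ --- the proof is complete.
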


\begin{lemma}
$[23]$(Kato-Ponce commutator estimates). If $s>0$, $f\in H^s(\mathbb{R}) \cap W^{1,\infty}(\mathbb{R})$, $g\in H^{s-1}(\mathbb{R}) \cap L^{\infty}(\mathbb{R})$, and denote that $\Lambda^s = (1-\Delta)^{\tfrac{s}{2}}$, where $\Delta$ is Laplacian then
$$\|\Lambda^s(fg)-f\Lambda^sg\|_{L^2} \le C\left(\|\Lambda^sf\|_{L^2}\|g\|_{L^{\infty}}+\|f_x\|_{L^{\infty}}\|\Lambda^{s-1}g\|_{L^2}\right).$$
\end{lemma}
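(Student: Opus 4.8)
The plan is to prove this estimate by Bony's paraproduct decomposition, reducing the full commutator to a single genuine commutator between $\Lambda^s$ and a low-frequency multiplication operator, which is the only place where the gain of a derivative on $f$ occurs. Using the dyadic blocks $\Delta_j$ and $S_j$ introduced above, write $fg=T_fg+T_gf+R(f,g)$ with paraproduct $T_fg=\sum_jS_{j-1}f\,\Delta_jg$ and remainder $R(f,g)=\sum_{|j-k|\le1}\Delta_jf\,\Delta_kg$, and decompose $f\Lambda^sg$ in the same way. Subtracting the two decompositions gives
$$[\Lambda^s,f]g=\sum_j\big(\Lambda^s(S_{j-1}f\,\Delta_jg)-S_{j-1}f\,\Lambda^s\Delta_jg\big)+\big(\Lambda^sT_gf-T_{\Lambda^sg}f\big)+\big(\Lambda^sR(f,g)-R(f,\Lambda^sg)\big),$$
so it suffices to bound the three groups separately.

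The second and third groups require no cancellation. In $T_gf=\sum_kS_{k-1}g\,\Delta_kf$ the high frequency sits on $f$, so the output of the $k$-th term is spectrally localized in an annulus of size $\sim2^{k}$ and $\Lambda^s$ acts there like $2^{ks}$; placing all $s$ derivatives on $f$ and keeping $g$ in $L^\infty$, then summing in $k$ by almost-orthogonality and Cauchy--Schwarz, produces the bound $C\|\Lambda^sf\|_{L^2}\|g\|_{L^\infty}$ for both $\Lambda^sT_gf$ and $T_{\Lambda^sg}f$. The remainder $R(f,g)$ is treated similarly, except that the spectrum of $\Delta_jf\,\Delta_kg$ lies in a ball rather than an annulus; here the hypothesis $s>0$ is exactly what is needed, via a Bernstein estimate and a convergent geometric series in the low-frequency output, to recover $C\|\Lambda^sf\|_{L^2}\|g\|_{L^\infty}$.

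The heart of the matter is the first group, the paraproduct commutator. Since $S_{j-1}f$ is frequency-localized below $2^{j}$ while $\Delta_jg$ is supported in an annulus of radius $\sim2^{j}$, the $j$-th term carries on the Fourier side the symbol $(1+|\xi|^2)^{s/2}-(1+|\eta|^2)^{s/2}$ evaluated with $|\xi-\eta|\ll|\eta|\sim2^{j}$. A first-order Taylor expansion of $\zeta\mapsto(1+|\zeta|^2)^{s/2}$ factors this symbol as $(\xi-\eta)$ times a symbol of order $s-1$: the factor $\xi-\eta$ transfers one derivative onto $f$, producing $f_x$, while the remaining operator of order $s-1$ acts on $g$, producing $\Lambda^{s-1}g$. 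Realizing the $j$-th commutator as $f_x$ convolved against a rescaled kernel built from $h=\mathcal{F}^{-1}\varphi$ with uniformly bounded $L^1$ norm, and summing in $j$ by almost-orthogonality, then yields $C\|f_x\|_{L^\infty}\|\Lambda^{s-1}g\|_{L^2}$, which completes the estimate.

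I expect the main obstacle to be this last factorization: making the mean-value expansion of the multiplier rigorous and controlling the intermediate frequency $\zeta$, which lies between $\eta$ and $\xi$ rather than exactly at $\eta$. The cleanest route is to absorb the resulting error into the kernel bound, checking that after the correct $2^{-j}$ and $2^{j(s-1)}$ rescalings the kernel stays uniformly integrable; this is precisely the content of the Coifman--Meyer / Bahouri--Chemin--Danchin commutator lemma. A secondary technical point is the low-frequency convergence of the remainder group, where the restriction $s>0$ is indispensable.
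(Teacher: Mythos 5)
First, a point of fact: the paper offers no proof of this lemma at all. It is stated with the citation [23] and used as a black box in Step 3 of Theorem 4.1, so the only argument behind it is Kato and Ponce's original one, which runs through the Coifman--Meyer theory of bilinear Fourier multipliers. Your proposal is therefore a genuinely different (and essentially correct) route: the self-contained Littlewood--Paley proof, splitting $[\Lambda^s,f]g$ via Bony's decomposition into the paraproduct commutator $\sum_j[\Lambda^s,S_{j-1}f]\Delta_j g$, the pair $\Lambda^s T_g f$ and $T_{\Lambda^s g}f$, and the remainder pair, then invoking for the first group the commutator lemma for a multiplier of order $s$ against a Lipschitz function (Bahouri--Chemin--Danchin, the paper's reference [1]). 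This has the merit of running entirely on the dyadic machinery the paper already recalls in Section 2, whereas the citation imports a deeper multiplier theorem; the price is that the frequency bookkeeping must be done by hand. Two details in your sketch deserve tightening in a full write-up. (i) The hypothesis $s>0$ is needed not only for the remainder but also for $T_{\Lambda^s g}f$: transferring the $s$ derivatives from $g$ onto $f$ there rests on the bound $\|S_{k-1}\Lambda^s g\|_{L^\infty}\lesssim\sum_{l\le k-2}2^{ls}\|\Delta_l g\|_{L^\infty}\lesssim 2^{ks}\|g\|_{L^\infty}$, which is exactly a geometric-series argument valid only when $s>0$. (ii) The symbol $(1+|\xi|^2)^{s/2}$ is not homogeneous, so the first-order Taylor factorization into $(\xi-\eta)$ times an order $s-1$ symbol must be carried out in the inhomogeneous class $S^s$, with the block $\Delta_{-1}$ and output frequencies of size $O(1)$ treated separately; this is standard, but it is precisely where your claim that the rescaled kernels are uniformly integrable needs to be checked rather than asserted. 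Neither point breaks the strategy.
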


\section{local well-posedness}
Mi, Guo and Luo has just studied the local well-posedness of (1) in [27], in this section, we study the local well-posedness in critical Besov space. For the convenience of proof, we rewrite (1) into another equivalent form

\begin{equation}
\begin{cases}
M_t+a(U,U_x)M_x=B(U,U_x)M,\\
M|_{t=0}=M_0,
\end{cases}
\end{equation}

where $M=(m_1,\cdots,m_N,n_1,\cdots,n_N)^T$, $U=(u_1,\cdots,u_N,v_1,\cdots,v_N)^T$,
\begin{equation*}
a(U,U_x)=\sum_{j=1}^Nu_jv_j,\ \ \ B(U,U_x)=\left(\begin{matrix}
B_{11} & 0\\
0 & B_{22}
\end{matrix}\right),
\end{equation*}
\begin{equation*}
B_{11}=\left(\begin{matrix}
u_1v_{1x}-u_{1x}v_1 & \cdots & u_1v_{Nx}-u_{1x}v_N\\
\vdots & \vdots & \vdots\\
u_Nv_{1x}-u_{Nx}v_1 & \cdots & u_Nv_{Nx}-u_{Nx}v_N\\
\end{matrix}\right)-\sum_{j=1}^N\left(2u_{jx}v_j+u_jv_{jx}\right)I_{N\times N},
\end{equation*}
\begin{equation*}
B_{22}=\left(\begin{matrix}
v_1u_{1x}-v_{1x}u_1 & \cdots & v_1u_{Nx}-v_{1x}u_N\\
\vdots & \vdots & \vdots\\
v_Nu_{1x}-v_{Nx}u_1 & \cdots & v_Nu_{Nx}-v_{Nx}u_N\\
\end{matrix}\right)-\sum_{j=1}^N\left(2v_{jx}u_j+v_ju_{jx}\right)I_{N\times N}.
\end{equation*}

The local well-posedness of (1) is stated as follows
\begin{theorem}
Suppose $M_0\in (B_{2,1}^{\frac{1}{2}})^{2n}$. Then there exists some $T>0$, such that system (4) has a unique solution in $[C([0,T);B_{2,1}^{\frac{1}{2}})\bigcap C^1([0,T);B_{2,1}^{-\frac{1}{2}})]^{2n}$ .
\end{theorem}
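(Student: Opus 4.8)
The plan is to build the solution by a successive-approximation scheme adapted to the transport equation (4), closing every estimate in the critical space $B_{2,1}^{1/2}$ by means of the transport a priori estimate (Lemma 2.6), the algebra and product rules of Proposition 2.2(iii) and Lemma 2.3, and the Fatou property of Proposition 2.2(iv). The structural fact that drives everything is that $U$ is recovered from $M$ by the componentwise convolution $U=G\star M$ with $G(x)=\tfrac12 e^{-|x|}$; thus $(1-\partial_x^2)^{-1}$ gains two derivatives, $\|U\|_{B_{2,1}^{s+2}}\le C\|M\|_{B_{2,1}^{s}}$. In particular, for $M\in B_{2,1}^{1/2}$ the fields $U,U_x,U_{xx}$ lie in $B_{2,1}^{5/2},B_{2,1}^{3/2},B_{2,1}^{1/2}$ respectively, and since $s=\tfrac12=\tfrac dp$ with $r=1$ the space $B_{2,1}^{1/2}$ is an algebra continuously embedded in $L^\infty$. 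Hence the coefficients $a(U,U_x)=\sum_j u_jv_j$ and $B(U,U_x)$, being finite sums of products of the entries of $U$ and $U_x$, actually lie in $B_{2,1}^{3/2}$ with norms controlled by a polynomial in $\|M\|_{B_{2,1}^{1/2}}$.

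First I would set $M^{(0)}=M_0$ and, given $M^{(k)}$ with $U^{(k)}=G\star M^{(k)}$, define $M^{(k+1)}$ as the solution of the linear transport problem
\begin{equation*}
\partial_t M^{(k+1)}+a\big(U^{(k)},U^{(k)}_x\big)\,\partial_x M^{(k+1)}=B\big(U^{(k)},U^{(k)}_x\big)M^{(k)},\qquad M^{(k+1)}\big|_{t=0}=M_0.
\end{equation*}
Taking $v=a(U^{(k)},U^{(k)}_x)$ and $g=B(U^{(k)},U^{(k)}_x)M^{(k)}$, Lemma 2.6 applies with $d=1$, $p=p_1=2$, $s=\tfrac12$; since $\tfrac12<1+\tfrac1{p_1}=\tfrac32$ we are in its first case, with $V_{p_1}(t)=\int_0^t\|\partial_x a\|_{B_{2,\infty}^{1/2}\cap L^\infty}$ dominated by $\int_0^t\|a\|_{B_{2,1}^{3/2}}$. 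Combining this with the product bounds for $a$ and $BM^{(k)}$ produces a closed nonlinear inequality from which a standard bootstrap fixes a time $T>0$, depending only on $\|M_0\|_{B_{2,1}^{1/2}}$, on which the whole sequence stays bounded in $L^\infty([0,T];B_{2,1}^{1/2})$.

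Next I would prove convergence by estimating the differences $D^{(k)}=M^{(k+1)}-M^{(k)}$ \emph{one regularity lower}, in $C([0,T];B_{2,1}^{-1/2})$. Subtracting consecutive equations gives a transport equation for $D^{(k)}$ with velocity $a(U^{(k)})$ and a source made of the convection defect $\big(a(U^{(k)})-a(U^{(k-1)})\big)\partial_x M^{(k)}$ together with the difference of the zeroth-order terms. Applying Lemma 2.6 at regularity $-\tfrac12$ (admissible since $-\tfrac12=-d\min\{\tfrac1{p'},\tfrac1{p_1}\}$ and $-\tfrac12<\tfrac32$) and the product estimate Lemma 2.3(ii) with $s_1=-\tfrac12\le\tfrac1p$ and the coefficient factors measured at $s_2=\tfrac32>\tfrac1p$, so that $s_1+s_2=1>0$, bounds the source by $C\,\|D^{(k-1)}\|_{B_{2,1}^{-1/2}}$ after using the two-derivative gain for the coefficient differences. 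Shrinking $T$ if necessary makes $\sum_k\|D^{(k)}\|_{L^\infty_T(B_{2,1}^{-1/2})}$ summable, so $M^{(k)}\to M$ in $C([0,T];B_{2,1}^{-1/2})$; the uniform bound and Proposition 2.2(iv) place $M$ in $L^\infty([0,T];B_{2,1}^{1/2})$, and interpolation upgrades the convergence to $C([0,T];B_{2,1}^{s'})$ for each $s'<\tfrac12$, which suffices to pass to the limit and identify $M$ as a solution. Reading $\partial_t M=-a\,\partial_x M+BM$ off (4) and estimating the right side in $B_{2,1}^{-1/2}$ gives $M\in C^1([0,T];B_{2,1}^{-1/2})$, while uniqueness follows from the same difference estimate applied to any two solutions.

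The step I expect to be the main obstacle is precisely the \emph{critical} value $s=\tfrac dp=\tfrac12$: the derivative lost in the convection defect $(a(U^{(k)})-a(U^{(k-1)}))\partial_x M^{(k)}$ cannot be absorbed at the top regularity, so the contraction must be run in $B_{2,1}^{-1/2}$, and it is exactly the two-derivative gain of $M\mapsto U$ together with the borderline product rule of Lemma 2.3(ii) at $r=1$ that makes the sum $s_1+s_2$ strictly positive and closes the estimate. A second delicate point is continuity in time with values in the \emph{strong} norm $B_{2,1}^{1/2}$, rather than merely in $B_{2,1}^{s'}$ for $s'<\tfrac12$; here one exploits that $r=1$ forces $\|(\mathrm{Id}-S_j)M_0\|_{B_{2,1}^{1/2}}\to0$, so the Littlewood--Paley tail of the solution is uniformly small in time, yielding $M\in C([0,T];B_{2,1}^{1/2})$ and completing the proof.
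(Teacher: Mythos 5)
Your Steps 1--2 (the iteration scheme and the uniform bound in $B_{2,1}^{1/2}$ via the first case of Lemma 2.6) coincide with the paper's Steps 1--2. The gap is in your convergence/uniqueness step, and it sits exactly at the point the paper's proof is built to circumvent. You propose a plain linear contraction for the differences in $B_{2,1}^{-1/2}$, ``applying Lemma 2.6 at regularity $-\tfrac12$''. But $s=-\tfrac12$ is precisely the endpoint $s=-d\min\{\tfrac{1}{p_1},\tfrac{1}{p'}\}$ for $d=1$, $p=p_1=2$, and at this endpoint the transport a priori estimate holds only for $r=\infty$: that is what the third branch in the definition of $V_{p_1}$ in Lemma 2.6 is recording, and it is explicit in reference [1], where the hypothesis reads $s>-d\min\{\tfrac{1}{p_1},\tfrac{1}{p'}\}$, or $s=-d\min\{\tfrac{1}{p_1},\tfrac{1}{p'}\}$ together with $r=\infty$. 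So Lemma 2.6 gives you no estimate in $B_{2,1}^{-1/2}$ (third index $1$), and your contraction inequality --- hence also your Gronwall-type uniqueness --- is unjustified. This is not a removable technicality within the paper's toolkit: it is exactly why the paper runs the difference estimates in $B_{2,\infty}^{-\frac12}$ (where the endpoint case $r=\infty$ applies), and then must pay a price, because the paraproduct estimate used there, $\|fg\|_{B_{2,\infty}^{-\frac12}}\le\|f\|_{B_{2,\infty}^{\frac12}\cap L^\infty}\|g\|_{B_{2,1}^{-\frac12}}$, requires the \emph{stronger} norm $B_{2,1}^{-\frac12}$ of the difference on the right-hand side. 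The resulting norm mismatch is bridged by the logarithmic interpolation inequality (14), which produces the estimate (19) with the loss $E_{n,m}\left(1-\log E_{n,m}\right)$; convergence is then obtained by the induction with factorial weights (21), and uniqueness by Osgood's lemma (Lemma 2.5 with $\mu(r)=r(1-\log r)$), not by Gronwall. Your proposal contains none of this logarithmic machinery, and without it the argument does not close at critical regularity.

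Two further points. First, a fixable inaccuracy: for the zeroth-order difference term $\left(B^{(k)}-B^{(k-1)}\right)M^{(k-1)}$, your recipe ``coefficient factors measured at $s_2=\tfrac32$'' cannot work, because $B$ contains $U_x$, so the two-derivative gain only yields $\|B^{(k)}-B^{(k-1)}\|_{B_{2,1}^{1/2}}\le C\|D^{(k-1)}\|_{B_{2,1}^{-1/2}}$, not control at level $\tfrac32$; one must instead use the algebra property of $B_{2,1}^{1/2}$ and the embedding $B_{2,1}^{1/2}\hookrightarrow B_{2,1}^{-1/2}$ (compare the paper's estimate (12), where this difference is measured in $B_{2,\infty}^{\frac12}\cap L^\infty$). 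Second, if you wanted to rescue your linear-contraction route, you would need to prove a refined commutator/transport estimate in $B_{2,1}^{-1/2}$ exploiting that the velocity here is two derivatives smoother than generic (here $a\in B_{2,1}^{5/2}$, $a_x\in B_{2,1}^{3/2}$); such a refinement is plausible, but it is a new estimate that neither Lemma 2.6 nor anything else stated in the paper provides, so as written your proof has a genuine hole at its central step.
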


\begin{remark}
Since the Cauchy problem of the CH equation is ill-posed in Besov space $B_{2,\infty}^{\frac{1}{2}}$, hence (1) is not locally well-posed in $B_{2,\infty}^{\frac{1}{2}}$. By embedding theorem, for $s>\frac{1}{2}$, $H^s \hookrightarrow B_{2,1}^{\frac{1}{2}} \hookrightarrow B_{2,\infty}^{\frac{1}{2}}$,
the Besov space $B_{2,\infty}^{\frac{1}{2}}$ is almost critical for the well-posedness of (1).
\end{remark}

\begin{proof}
In order to prove Theorem 3.1, we proceed as the following steps.

\textbf{Step 1.}
Firstly, we construct approximate solutions which are smooth solutions of some linear equations.

Starting for $M^0=M_0$ we define by induction sequences $(M^n)_{n \in \mathbb{N}}$ by solving the following linear transport equations:
\begin{equation}
\begin{cases}
M_t^{n+1}+a^nM_x^{n+1}=B^nM^n,\\
M|_{t=0}^{n+1}=S_{n+1}M_0,
\end{cases}
\end{equation}
where $M^n=(m_1^n,...m_N^n,n_1^n,...n_N^n)^T$, $U^n=(u_1^n,...u_N^n,v_1^n,...v_N^n)^T$, $a^n=a(U^n,U^n_x)$ and $B^n=B(U^n,U^n_x)$.

\textbf{Step 2.}
Next, we show that, for some fixed positive number $T$, the sequences $(M^n)_{n\in\mathbb{N}}$ is uniformly bounded in $[C([0,T);B_{2,1}^{\frac{1}{2}})\bigcap C^1([0,T);B_{2,1}^{-\frac{1}{2}})]^{2n}$, we define that $V(t)=\displaystyle\int_0^t\|a^n_x\|_{B_{2,1}^{-\frac{1}{2}}}d\tau$. By virtue of $B_{2,1}^{\frac{1}{2}}$ is an algebra, we infer that

\begin{equation}
\begin{aligned}
\|B^nM^n\|_{B_{2,1}^{\frac{1}{2}}} &\le \|B^n\|_{B_{2,1}^{\frac{1}{2}}}\|M^n\|_{B_{2,1}^{\frac{1}{2}}}\\
&\le C(\|U^n\|_{B_{2,1}^{\frac{1}{2}}}^2+\|U^n_x\|_{B_{2,1}^{\frac{1}{2}}}^2)\|M^n\|_{B_{2,1}^{\frac{1}{2}}} \le C\|M^n\|_{B_{2,1}^{\frac{1}{2}}}^3.
\end{aligned}
\end{equation}
Applying Lemma 2.6 with $p_1=p=2$, we have
\begin{equation}
\begin{aligned}
\|M^{n+1}(t)\|_{{L_t^\infty}(B_{2,1}^{\frac{1}{2}})} &\le e^{CV(t)}\Big(\|S_{n+1}M_0\|_{B_{2,1}^{\frac{1}{2}}}+C\int_0^te^{-CV(\tau)}\|B^n(\tau)M^n(\tau)\|_{B_{2,1}^{\frac{1}{2}}}d\tau\Big)\\
&\le e^{C\int_0^t\|M^n(\tau)\|_{B_{2,1}^{\frac{1}{2}}}^2d\tau}\Big(\|M_0\|_{B_{2,1}^{\frac{1}{2}}}+C\int_0^te^{-C\int_0^\tau\|M^n(s)\|_{B_{2,1}^{\frac{1}{2}}}^2ds}\|M^n(\tau)\|_{B_{2,1}^{\frac{1}{2}}}^3d\tau\Big).
\end{aligned}
\end{equation}
With a similar argument in [24], we have the following estimation
\begin{equation}
\|M^{n+1}(t)\|_{B_{2,1}^{\frac{1}{2}}} \le \frac{\|M_0\|_{B_{2,1}^{\frac{1}{2}}}}{\sqrt{1-4C\|M_0\|_{B_{2,1}^{\frac{1}{2}}}^2 t}},
\end{equation}
which means $(M^n)_{n\in\mathbb{N}}$ is uniformly bounded in $(C([0,T];B_{2,1}^{\frac{1}{2}}))^{2n}$. Providing by (1), it is easy to check $\partial_t(M^{n+1})$ in $C([0,T);B_{2,1}^{\frac{1}{2}}))^{2n}$ is also uniformly bounded. Thus, $(M^n)_{n\in\mathbb{N}}$ is uniformly bounded in $[C([0,T);B_{2,1}^{\frac{1}{2}})\bigcap C^1([0,T);B_{2,1}^{-\frac{1}{2}})]^{2n}$.

\textbf{Step 3.}
In this part, we prove that $(M^n)_{n\in\mathbb{N}}$ is a Cauchy sequence in $C([0,T);B_{2,\infty}^{-\frac{1}{2}})^{2n}$. By virtual of (4), we have

\begin{equation}
\begin{aligned}
(M^{m+n+1}-M^{n+1})_t&+a^{m+n}(M^{m+n+1}-M^{n+1})_x=R_{m,n}=\\
&-(a^{m+n}-a^n)M^{n+1}_x+B^{m+n}(M^{m+n}-M^n)+(B^{m+n}-B^n)M^n,
\end{aligned}
\end{equation}
with the initial data
$$(M^{m+n+1}-M^{n+1})\Big|_{t=0}=(S_{m+n+1}-S_{n+1})M_0.$$

With a direct calculation, we have
\begin{equation}
\begin{aligned}
&\ \ \ \|(a^{m+n}-a^n)M_x^{n+1}\|_{B_{2,\infty}^{-\frac{1}{2}}}\\
&\le C\|a^{m+n}-a^n\|_{B_{2,\infty}^{\frac{1}{2}} \cap L^\infty}\|M_x^{n+1}\|_{B_{2,1}^{-\frac{1}{2}} }\\
&\le C\|U^{m+n}-U^n\|_{B_{2,\infty}^{\frac{3}{2}}}(\|U^{m+n}\|_{B_{2,\infty}^{\frac{3}{2}}}+\|U^n\|_{B_{2,\infty}^{\frac{3}{2}}})\|M^{n+1}\|_{B_{2,1}^{\frac{1}{2}}}\\
&\le C\|M^{m+n}-M^n\|_{B_{2,\infty}^{-\frac{1}{2}}}(\|M^{m+n}\|_{B_{2,\infty}^{-\frac{1}{2}}}+\|M^n\|_{B_{2,\infty}^{-\frac{1}{2}}})\|M^{n+1}\|_{B_{2,1}^{\frac{1}{2}}}\\
&\le C\|M^{m+n}-M^n\|_{B_{2,1}^{-\frac{1}{2}}}(\|M^{m+n}\|_{B_{2,1}^{\frac{1}{2}}}+\|M^n\|_{B_{2,1}^{\frac{1}{2}}})\|M^{n+1}\|_{B_{2,1}^{\frac{1}{2}}},
\end{aligned}
\end{equation}
here we used the fact that $U=(1-\partial_x^2)^{-1}M$, where $(1-\partial_x^2)^{-1}$ is a $S^{-2}$-multiplier, and paraproduct [28]
$$\|fg\|_{B_{2,\infty}^{-\frac{1}{2}}} \le\|f\|_{B_{2,\infty}^{\frac{1}{2}} \cap L^\infty}\|g\|_{B_{2,1}^{-\frac{1}{2}}}.$$
Similarly, we have the estimations for the remaining two terms
\begin{equation}
\begin{aligned}
&\ \ \ \|B^{m+n}(M^{m+n}-M^n)\|_{B_{2,\infty}^{-\frac{1}{2}}}\\
&\le C\|B^{m+n}\|_{B_{2,\infty}^{\frac{1}{2}}\cap L^\infty}\|(M^{m+n}-M^n)\|_{B_{2,1}^{-\frac{1}{2}}}\\
&\le C\|U^{m+n}\|_{B_{2,1}^{\frac{1}{2}}}\|U^{m+n}_x\|_{B_{2,1}^{\frac{1}{2}}}\|(M^{m+n}-M^n)\|_{B_{2,1}^{-\frac{1}{2}}}\\
&\le C\|(M^{m+n}-M^n)\|_{B_{2,1}^{-\frac{1}{2}}}\|M^{m+n}\|_{B_{2,1}^{\frac{1}{2}}}^2,
\end{aligned}
\end{equation}
\begin{equation}
\begin{aligned}
&\ \ \ \|(B^{m+n}-B^n)M^n\|_{B_{2,\infty}^{-\frac{1}{2}}}\\
&\le C\|B^{m+n}-B^n\|_{B_{2,\infty}^{\frac{1}{2}} \cap L^\infty}\|M^n\|_{B_{2,1}^{-\frac{1}{2}}}\\
&\le C\Big((\|U^{m+n}\|_{B_{2,\infty}^{\frac{1}{2}} \cap L^\infty}+\|U^n\|_{B_{2,\infty}^{\frac{1}{2}} \cap L^\infty})\|(U^{m+n}-U^n)_x\|_{B_{2,\infty}^{\frac{1}{2}}\cap L^\infty}\\
&+(\|U^{m+n}_x\|_{B_{2,\infty}^{\frac{1}{2}}\cap L^\infty}+\|U^n_x\|_{B_{2,\infty}^{\frac{1}{2}}\cap L^\infty})\|U^{m+n}-U^n\|_{B_{2,\infty}^{\frac{1}{2}}\cap L^\infty}\Big)\|M^n\|_{B_{2,1}^{\frac{1}{2}}}\\
&\le C\|M^{m+n}-M^n\|_{B_{2,1}^{-\frac{1}{2}}}(\|M^{m+n}\|_{B_{2,1}^{\frac{1}{2}}}\|M^n\|_{B_{2,1}^{\frac{1}{2}}}+\|M^n\|_{B_{2,1}^{\frac{1}{2}}}^2).
\end{aligned}
\end{equation}
In view of Lemma 2.6 to (4) and take advantage of (10)-(12), for all $t \in [0,T]$, we obtain
\begin{equation}
\begin{aligned}
&\ \ \ \|(M^{n+m+1}-M^{n+1})(t)\|_{B_{2,\infty}^{-\frac{1}{2}}}\\
&\le \|S_{n+m+1}M_0-S_{n+1}M_0\|_{B_{2,\infty}^{-\frac{1}{2}}}+C\int_0^tV'(\tau)\|(M^{n+m}-M^n)(\tau)\|_{B_{2,\infty}^{-\frac{1}{2}}}\\
&\ \ \ +\int_0^t\|\big((a^{m+n}-a^n)M_x^{n+1}+B^{n+m}(M^{n+m}-M^n)+(B^{m+n}-B^n)M^n\big)(\tau)\|_{B_{2,\infty}^{-\frac{1}{2}}}d\tau\\
&\le \|S_{n+m+1}M_0-S_{n+1}M_0\|_{B_{2,\infty}^{-\frac{1}{2}}}+C\int_0^t\|a^{n+m}\|_{B_{2,1}^{\frac{3}{2}}}\|(M^{n+m}-M^n)(\tau)\|_{B_{2,\infty}^{-\frac{1}{2}}}d\tau\\
&\ \ \ +C\int_0^t\|(M^{n+m}-M^n)(\tau)\|_{B_{2,1}^{-\frac{1}{2}}}(\|M^{n+m}(\tau)\|_{B_{2,1}^{\frac{1}{2}}}+\|M^n(\tau)\|_{B_{2,1}^{\frac{1}{2}}}+\|M^{n+1}\|_{B_{2,1}^{\frac{1}{2}}})^2d\tau.
\end{aligned}
\end{equation}
where $V(t)=\displaystyle\int_0^t\|a^{n+m}_x\|_{B_{2,1}^{\frac{1}{2}}}d\tau$. In view of the following interpolation inequality [29]
\begin{equation}
\|f\|_{B_{2,1}^{-\frac{1}{2}}} \le C\|f\|_{B_{2,\infty}^{-\frac{1}{2}}}\log \left(e+\frac{\|f\|_{B_{2,\infty}^{\frac{1}{2}}}}{\|f\|_{B_{2,\infty}^{-\frac{1}{2}}}}\right),
\end{equation}
and the auxiliary function
\begin{equation}
q(x)=\log(e+\theta)\log\frac{1}{x}-\log(e+\frac{\theta}{x})+\log(e+\theta),\ \ \ \theta>0.
\end{equation}
Since for $0<x \le 1$, $q'(x)<0$ and $q(1)=0$, we have
\begin{equation}
\log(e+\frac{\theta}{x}) \le \log(e+\theta)(1+\log\frac{1}{x}),
\end{equation}
throw (14), we have
\begin{equation}
\|(M^{n+m}-M^n)(\tau)\|_{B_{2,1}^{-\frac{1}{2}}} \le C\|(M^{n+m}-M^n)(\tau)\|_{B_{2,\infty}^{-\frac{1}{2}}}\left(1-\log\left(\|(M^{n+m}-M^n)(\tau)\|_{B_{2,\infty}^{-\frac{1}{2}}}\right)\right).
\end{equation}
As we have just proved that the sequence $(M^n)_{n\in\mathbb{N}}$ is uniformly bounded in $(C([0,T];B_{2,1}^{\frac{1}{2}}))^{2n}$ and take (17) into (13), we have
\begin{equation}
\begin{aligned}
&\|M^{n+m+1}-M^{n+1}\|_{B_{2,\infty}^{-\frac{1}{2}}}\le \|S_{n+m+1}M_0-S_{n+1}M_0\|_{B_{2,\infty}^{-\frac{1}{2}}}\\
&\ \ \ +C\int_0^t\|(M^{n+m}-M^n)(\tau)\|_{B_{2,\infty}^{-\frac{1}{2}}}\left(1-\log\left(\|(M^{n+m}-M^n)(\tau)\|_{B_{2,\infty}^{-\frac{1}{2}}}\right)\right)d\tau.
\end{aligned}
\end{equation}
Denote $E_{n,m}(t)=\|M^{n+m}-M^n\|_{B_{2,\infty}^{-\frac{1}{2}}}$, then (18) can be rewritten as
\begin{equation}
E_{n+1,m}(t) \le C\Big(E_{n+1,m}(0)+\int_0^t E_{n,m}\left(1-\log E_{n,m}\right)(\tau)d\tau\Big).
\end{equation}

Since for any function h, $S_{n+m+1}h(0)-S_{n+1}h(0)=\displaystyle\sum_{j=n+1}^{n+m}\Delta_jh(0)$, there exits a constant $C$ which only depend on T, such that [28]
\begin{equation}
\begin{aligned}
&\|M_0^{m+n+1}-M_0^{n+1}\|_{B_{2,\infty}^{-\frac{1}{2}}}=\|S_{m+n+1}M_0-S_{n+1}M_0\|_{B_{2,\infty}^{-\frac{1}{2}}}\\
&=\|\sum_{k=n+1}^{m+n}\Delta_kM_0\|_{B_{2,\infty}^{-\frac{1}{2}}} \le c2^{-n}.
\end{aligned}
\end{equation}
Arguing by induction, for a fixed number $m \in \mathbb{N}$, we get
\begin{equation}
E_{n+1,m}(t)\left(1-\log E_{n+1,m}\right) \le C\displaystyle\sum_{k=0}^n 2^{-(n-k)}\frac{(TC)^k}{k!}+\frac{(TC)^{n+1}}{(n+1)!}E_{0,m}(t)\left(1-\log E_{0,m}\right).
\end{equation}
Due to $\|M\|_{B_{2,\infty}^{-\frac{1}{2}}}$ and $C$ are bounded, there exists a constant $C_T$ independent of $m,n$ such that
\begin{equation}
\|M^{n+m+1}-M^{n+1}\|_{B_{2,\infty}^{-\frac{1}{2}}} \le C_T2^{-n}.
\end{equation}
Thus $(M^n)_{n\in\mathbb{N}}$ is a Cauchy sequence in $C([0,T);B_{2,\infty}^{-\frac{1}{2}})^{2n}$. Because $\|M^n\|_{B_{2,1}^{\frac{1}{2}}} \le M$, and take advantage of the interpolation inequality (14), we deduce that $(M^n)$ tends to $M$ in $C([0,T);B_{2,1}^{-\frac{1}{2}})^{2n}$.

\textbf{Step 4.}
Finally, we prove the uniqueness of the solution to (4). Suppose $M_1$ and $M_2$ are two solutions of (4), we have
\begin{equation}
(M_2-M_1)_t+a_2(M_2-M_1)_x=-(a_2-a_1)M_{1x}+B_2(M_2-M_1)+(B_2-B_1)M_1.
\end{equation}
Similarly with Step 3, by virtual of Lemma 2.6, we have the following estimation
\begin{equation}
\|M_2-M_1\|_{B_{2,\infty}^{-\frac{1}{2}}} \le e^{c\int_0^ta_2(\tau)d\tau}\|M_{20}-M_{10}\|_{B_{2,\infty}^{-\frac{1}{2}}} +\int_0^te^{c\int_\tau^ta_2(s)ds}\|M_2-M_1\|_{B_{2,1}^{-\frac{1}{2}}}d\tau.
\end{equation}
Denote $\mathcal{M}(t)=e^{-c\int_0^ta_2(\tau)d\tau}\|M_2-M_1\|_{B_{2,\infty}^{-\frac{1}{2}}}$, for a sufficient large enough constant $c$, we have $\mathcal{M}(t) < 1$. Thanks to interpolation inequality (14), we have
\begin{equation}
\begin{aligned}
\mathcal{M}(t)&\le \mathcal{M}(0)+\int_0^t\mathcal{M}(\tau)\log\left(e+\frac{\|M_2-M_1\|_{B_{2,\infty}^{\frac{1}{2}}}}{\|M_2-M_1\|_{B_{2,\infty}^{-\frac{1}{2}}}}\right)d\tau\\
&\le \mathcal{M}(0)+c\int_0^t\mathcal{M}(\tau)\left(1-\log(\mathcal{M}(\tau))\right)d\tau.
\end{aligned}
\end{equation}
By virtual of Osgood Lemma 2.5, we take $\mu(r)=r(1-\log r)$, $\mathcal{N}(r)=\log(1-\log(r))$. Finally, we obtain
\begin{equation}
\mathcal{M}(t) \le e^{1-[1-\log \mathcal{M}(0)]e^{-ct}}.
\end{equation}
With the interpolation inequality (14), we complete the proof of uniqueness.

\end{proof}

\section{Blow-up criteria}

In this section, we present a blow-up criterion for (1) in Sobolev space. The first result is as follows.
\begin{theorem}
Let $M_0 \in (H^s(\mathbb{R}))^{2n}$ with $s>\tfrac{1}{2}$ , and $T>0$ be he maximal existence time of the solution $M(t)$ to (4). If the solution blows up in finite time, then we have
$$\int_0^T\|M(\tau)\|_{L^\infty}^2d\tau = \infty.$$
\end{theorem}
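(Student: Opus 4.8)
The plan is to argue by contradiction via the standard blow-up alternative: the local theory underlying Theorem 3.1 (and its $H^s$ version) guarantees that the solution persists as long as $\|M(t)\|_{H^s}$ stays bounded, so it suffices to show that finiteness of $\int_0^T\|M\|_{L^\infty}^2\,d\tau$ forces $\sup_{t<T}\|M(t)\|_{H^s}<\infty$. Concretely I would aim for the differential inequality
\[
\frac{d}{dt}\|M(t)\|_{H^s}^2 \le C\,\|M(t)\|_{L^\infty}^2\,\|M(t)\|_{H^s}^2,
\]
and close by Gronwall, giving $\|M(t)\|_{H^s}^2 \le \|M_0\|_{H^s}^2\exp\!\big(C\int_0^t\|M\|_{L^\infty}^2\,d\tau\big)$, which remains finite on $[0,T)$ under the assumed bound, contradicting the maximality of $T$.

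Before the energy estimate I would record the gain of regularity in the coefficients. Writing $U=(1-\partial_x^2)^{-1}M=G*M$ with $G=\tfrac12 e^{-|x|}$ and $\|G\|_{L^1}=\|G_x\|_{L^1}=1$, one gets $\|U\|_{L^\infty}+\|U_x\|_{L^\infty}\le C\|M\|_{L^\infty}$ and $\|U\|_{H^{s+2}}\le C\|M\|_{H^s}$. Since $a=\sum_j u_jv_j$ and the entries of $B$ are quadratic in $(U,U_x)$, the Moser estimate (Lemma 2.4) yields the two families of bounds used repeatedly: $\|a_x\|_{L^\infty}+\|B\|_{L^\infty}\le C\|M\|_{L^\infty}^2$ and $\|a\|_{H^{s+2}}+\|B\|_{H^s}\le C\|M\|_{L^\infty}\|M\|_{H^s}$. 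The point is that $a$ and $B$ are two derivatives smoother than $M$, so their high Sobolev norms cost only a single power of $\|M\|_{L^\infty}$.

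With $\Lambda^s=(1-\Delta)^{s/2}$, I apply $\Lambda^s$ to (4) and pair with $\Lambda^s M$ in $L^2$. The right-hand side is harmless: $\langle\Lambda^s(BM),\Lambda^s M\rangle\le \|BM\|_{H^s}\|M\|_{H^s}\le C(\|B\|_{H^s}\|M\|_{L^\infty}+\|B\|_{L^\infty}\|M\|_{H^s})\|M\|_{H^s}\le C\|M\|_{L^\infty}^2\|M\|_{H^s}^2$. For the transport term I split $\langle\Lambda^s(aM_x),\Lambda^sM\rangle=\langle a\Lambda^sM_x,\Lambda^sM\rangle+\langle[\Lambda^s,a]M_x,\Lambda^sM\rangle$; the first piece integrates by parts to $-\tfrac12\int a_x(\Lambda^sM)^2$, bounded by $\tfrac12\|a_x\|_{L^\infty}\|M\|_{H^s}^2$. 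The second piece is the crux. Applying the Kato--Ponce commutator estimate (Lemma 2.8) with the top-order derivative loaded onto the smooth factor, I expect
\[
\|[\Lambda^s,a]M_x\|_{L^2}\le C\big(\|a_x\|_{L^\infty}\|M\|_{H^s}+\|a\|_{H^{s+1}}\|M\|_{L^\infty}\big)\le C\|M\|_{L^\infty}^2\|M\|_{H^s};
\]
it is essential that the derivative lands on $a$, whose $H^{s+1}$-norm is controlled by $\|M\|_{L^\infty}\|M\|_{H^s}$, so no uncontrollable $\|M_x\|_{L^\infty}$ appears. Summing the three contributions gives the target inequality.

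The main obstacle is exactly this commutator step: a naive application of Lemma 2.8 with $f=a$, $g=M_x$ produces the term $\|\Lambda^s a\|_{L^2}\|M_x\|_{L^\infty}$, and $\|M_x\|_{L^\infty}$ is not controlled by $\|M\|_{H^s}$ when $\tfrac12<s\le\tfrac32$. Overcoming it requires the sharper commutator form above, which is legitimate precisely because the extra smoothing in $U=(1-\partial_x^2)^{-1}M$ makes $a$ and $B$ two orders more regular than $M$. I would also note the usual technical caveat that the energy identity $\frac{d}{dt}\|M\|_{H^s}^2=2\langle\Lambda^sM_t,\Lambda^sM\rangle$ is first established for smooth (e.g. mollified or Littlewood--Paley--truncated) solutions and then passed to the limit, as in [24].
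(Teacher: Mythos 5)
Your overall strategy---Gronwall on $\|M\|_{H^s}$, the smoothing bounds on $a$ and $B$ coming from $U=G*M$, and contradiction with the maximality of $T$---is the same as the paper's, and your handling of the source term $\Lambda^s(BM)$ and of $\langle a\Lambda^sM_x,\Lambda^sM\rangle$ (integration by parts against $a_x$) coincides with the paper's Step 3. The gap is exactly the step you flag as the crux and then only assert (``I expect''): the commutator bound
\[
\|[\Lambda^s,a]M_x\|_{L^2}\le C\left(\|a_x\|_{L^\infty}\|M\|_{H^s}+\|a\|_{H^{s+1}}\|M\|_{L^\infty}\right).
\]
This is not the paper's Lemma 2.8, which with $f=a$, $g=M_x$ unavoidably produces the term $\|\Lambda^sa\|_{L^2}\|M_x\|_{L^\infty}$, and as stated it sits at a genuine endpoint. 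Writing Bony's decomposition with paraproduct $T$ and remainder $R$,
\[
[\Lambda^s,a]M_x=[\Lambda^s,T_a]M_x+\Lambda^s\left(T_{M_x}a+R(a,M_x)\right)-T_{\Lambda^sM_x}a-R(a,\Lambda^sM_x),
\]
every term obeys your bound except the last remainder, whose two regularity indices $(s+1)$ and $-(s+1)$ sum to zero: with only $H^{s+1}=B^{s+1}_{2,2}$ control of $a$, the dyadic series defining $R(a,\Lambda^sM_x)$ has merely square-summable coefficients, so it lands in $B^0_{2,\infty}$ rather than $L^2$ and need not converge there. Hence the ``sharper commutator form'' cannot simply be cited; with the $H^{s+1}$ norm it is not obtainable by the standard argument, and your proof does not close as written.

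The repair is already contained in your own preliminaries: you recorded that $a$ gains \emph{two} derivatives, $\|a\|_{H^{s+2}}\le C\|M\|_{L^\infty}\|M\|_{H^s}$. Since $\|a\|_{B^{s+1}_{2,1}}\le C\|a\|_{H^{s+2}}$, the offending remainder becomes absolutely summable in $L^2$, all other terms only improve, and the commutator estimate holds with $\|a\|_{H^{s+2}}$ in place of $\|a\|_{H^{s+1}}$; the final differential inequality $\frac{d}{dt}\|M\|_{H^s}^2\le C\|M\|_{L^\infty}^2\|M\|_{H^s}^2$ is unchanged, so with this correction your argument works uniformly for all $s>\frac12$. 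It is worth contrasting this with the paper, which never needs any commutator estimate beyond Lemma 2.8: it runs a three-case bootstrap in $s$ --- for $s\in(\frac12,1]$ the transport estimate of Lemma 2.7 plus Moser; for $s\in(1,2]$ the equation differentiated in $x$ and Lemma 2.7 applied to $M_x$ with exponent $s-1$; for $s>2$ precisely your energy method, where the dangerous factor $\|M_x\|_{L^\infty}\le C\|M\|_{H^{3/2+\epsilon}}$ is finite because the case $s\in(1,2]$ has already been established under the hypothesis $\int_0^T\|M\|_{L^\infty}^2d\tau<\infty$. The paper's route costs a case analysis but stays entirely within its stated lemmas; yours, once the strengthened commutator lemma is actually proved (with the $H^{s+2}$ norm of $a$), is shorter and uniform in $s$.
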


\begin{proof}
\textbf{Step 1.}
For $s \in (\frac{1}{2},1]$, applying Lemma 2.6 to (4), we have

\begin{equation}
\|M(t)\|_{H^s} \le \|M_0\|_{H^s} +C\int_0^t\|a(\tau)\|_{W^{1,\infty}}\|M(\tau)\|_{H^s}+\|B(\tau)M(\tau)\|_{H^s}d\tau,
\end{equation}

Because of the definition of $\|a\|_{W^{1,\infty}}=\displaystyle\sup_{x\in\mathbb{R}}(\|a\|_{L^\infty}+\|a_x\|_{L^\infty})$, we claim that
\begin{equation}
\|a\|_{W^{1,\infty}} \le C\|M\|_{L^\infty}^2,
\end{equation}
here we used the Young inequality
\begin{equation*}
\|U\|_{L^r}=\|G*M\|_{L^r}\le \|G\|_{L^p}\|M\|_{L^q},\ \ \ \frac{1}{r}+1=\frac{1}{p}+\frac{1}{q},
\end{equation*}
where $G=\frac{1}{2}e^{-|x|}$ is Poisson kernel of Bessel potential operator $(1-\partial_x^2)$. For the second term in the integral in (27), by virtual of Lemma 2.4, we have

\begin{equation}
\|BM\|_{H^s}\le C(\|B\|_{L^\infty}\|M\|_{H^s}+\|B\|_{H^s}\|M\|_{L^\infty}) \le C\|M\|_{L^\infty}^2\|M\|_{H^s}.
\end{equation}

Take (28) and (29) into(27), it's easy to deduce that
\begin{equation}
\|M(t)\|_{H^s} \le \|M_0\|_{H^s}+C\int_0^t\|M(\tau)\|_{L^\infty}^2\|M(\tau)\|_{H^s}d\tau,
\end{equation}
taking advantage of Gronwall's inequality, we obtain
\begin{equation}
\|M(t)\|_{H^s} \le \|M_0\|_{H^s}\cdot \exp\Bigg\{C\int_0^t\|M(\tau)\|_{L^\infty}^2d\tau\Bigg\}.
\end{equation}

Therefore, if $T < \infty$ satisfies $\displaystyle\int_0^t\|M(\tau)\|_{L^\infty}^2d\tau < \infty$, then we deduce that
\begin{equation}
\limsup_{t\rightarrow T}\|M(t)\|_{H^s} < \infty,
\end{equation}
which contradicts the assumption that $T$ is the maximal existence time.

\textbf{Step 2.}
For $s \in (1,2]$, by differentiating (4) with respect to $x$, we have

\begin{equation}
M_{xt}+a(U,U_x)M_{xx}=-a_x(U,U_x)M_x+[B(U,U_x)M]_x,
\end{equation}

By virtue of Lemma 2.7 with $s-1 \in (0,1]$ 
, we obtain
\begin{equation}
\begin{aligned}
\|M_x(t)\|_{H^{s-1}} &\le \|M_{0,x}\|_{H^{s-1}} +C\int_0^t\|a(\tau)\|_{W^{1,\infty}}\|M_x(\tau)\|_{H^{s-1}}\\
&\ \ \ +\|a(\tau)_xM(\tau)_x\|_{H^{s-1}}d\tau+\|(B(\tau)M(\tau))_x\|_{H^{s-1}},
\end{aligned}
\end{equation}

For $\|a_xM_x\|_{H^{s-1}}$, applying Lemma 2.4, and using the fact that $\|f_x\|_{H^{s-1}} \le C\|f\|_{H^s}$, we have
\begin{equation}
\begin{aligned}
\|a_xM_x\|_{H^{s-1}} &\le C(\|a_x\|_{H^s}\|M\|_{L^\infty}+\|a_x\|_{L^\infty}\|M_x\|_{H^{s-1}})\\
&\le C(\|a\|_{H^{s+1}}\|M\|_{L^\infty}+\|a_x\|_{L^\infty}\|M\|_{H^s})\\
&\le C\|M\|_{L^\infty}^2\|M\|_{H^s}.
\end{aligned}
\end{equation}

It is easy to check that just along with (30) for $s-1$ instead of s ensures that (31) is still holds.

\textbf{Step 3.} For $s > 2$, applying $\Lambda^s$ to (4), we have

\begin{equation}
(\Lambda^sM)_t+\Lambda^s(aM_x)=\Lambda^s(BM).
\end{equation}

Multiplying by $\Lambda^sM$ and integrating over $R$, we have

\begin{equation}
\begin{aligned}
&\int_R(\Lambda^sM)(\Lambda^sM_t)dx+\int_R(\Lambda^sM)(\Lambda^sM)_xa dx\\
&\ \ \ =\int_R -\Big[\Lambda^s(M_xa)-(\Lambda^sM_x)a\Big](\Lambda^sM) +[\Lambda^s(BM)](\Lambda^sM)dx.
\end{aligned}
\end{equation}

Using H\"older's inequality and integrating by parts, we obtain

\begin{equation}
\begin{aligned}
\|M(t)\|_{H^s} &\le \|M_0\|_{H^s}+C\displaystyle\int_0^t\|a(\tau)\|_{W^{1,\infty}}\|M(\tau)\|_{H^s}\\
&+\|\Lambda^s(M_x(\tau)a(\tau))-(\Lambda^sM_x(\tau))a(\tau)\|_{L^2}+ \|B(\tau)M(\tau)\|_{H^s}d\tau.
\end{aligned}
\end{equation}

Thanks to Lemma 2.8 and Sobolev embedding $H^{\frac{3}{2}+\epsilon} \hookrightarrow W^{1,\infty}$ with $\epsilon >0$, we infer that

\begin{equation*}
\begin{aligned}
\|\Lambda^s(M_xa)-(\Lambda^sM_x)a\|_{L^2} &\le C\Big(\|\Lambda^sa\|_{L^2(R)}\|M_x\|_{L^\infty(R)}+\|a_x\|_{L^\infty(R)}\|\Lambda^{s-1}M_x\|_{L^2(R)}\Big)\\
&\le C\Big(\|a\|_{H^s}\|M\|_{H^{\frac{3}{2}+\epsilon}}+\|a\|_{W^{1,\infty}}\|M\|_{H^s}\Big).
\end{aligned}
\end{equation*}

As we have proved in Step 2 that if (31) holds for $s \in (1,2]$, then $\|M\|_{H^{\frac{3}{2}+\epsilon}}$ is uniformly bounded for $\frac{3}{2}+\epsilon \in (1,2)$, we note that (31) is still holds in step 3, which is contradict to the assumption that $T$ is the maximum exist time. Thus we complete the proof of the theorem for $s > 2$. Consequently, we prove the theorem by Step 1-3.
\end{proof}

By Sobolev's embedding theorem and Theorem 4.1, we have the following corollary.
\begin{corollary}
Let $M_0 \in (H^s(\mathbb{R}))^{2n}$ with $s>\tfrac{1}{2}$ , and $T>0$ be he maximal existence time of the solution $M(t)$ to (4). Then the solution blows up in finite time if and only if
$$\limsup_{t\rightarrow T}\|m_i(t)\|_{L^\infty}=+\infty\ \ \ or\ \ \ \limsup_{t\rightarrow T}\|n_i(t)\|_{L^\infty}=+\infty,\ \ \ i=\{1\cdots N\}.$$
\end{corollary}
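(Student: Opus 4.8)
The plan is to deduce the corollary directly from Theorem 4.1 together with the elementary observation that the $L^\infty$-norm of the vector $M$ is comparable to the maximum of the $L^\infty$-norms of its scalar components. Since $M=(m_1,\dots,m_N,n_1,\dots,n_N)^T$, one has
$$\max_{1\le i\le N}\{\|m_i\|_{L^\infty},\|n_i\|_{L^\infty}\}\le \|M\|_{L^\infty}\le \sum_{i=1}^N\big(\|m_i\|_{L^\infty}+\|n_i\|_{L^\infty}\big),$$
so $\|M(t)\|_{L^\infty}$ stays bounded on $[0,T)$ if and only if every component norm $\|m_i(t)\|_{L^\infty}$ and $\|n_i(t)\|_{L^\infty}$ does. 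This reduces the whole statement to a comparison between the scalar blow-up quantities and the integral criterion of Theorem 4.1.

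For the direction asserting that a divergent component forces blow-up, I would argue directly. Suppose $\limsup_{t\to T}\|m_{i_0}(t)\|_{L^\infty}=+\infty$ for some index $i_0$ (the case of $n_{i_0}$ is identical). By the left inequality above, $\limsup_{t\to T}\|M(t)\|_{L^\infty}=+\infty$, and then the Sobolev embedding $H^s\hookrightarrow L^\infty$ valid for $s>\tfrac12$ (Proposition 2.2) gives $\|M(t)\|_{L^\infty}\le C\|M(t)\|_{H^s}$, whence $\limsup_{t\to T}\|M(t)\|_{H^s}=+\infty$. Since $M\in C([0,T);H^s)$ with $T$ the maximal existence time, this is precisely the statement that the solution blows up at the finite time $T$.

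For the converse direction I would argue by contraposition using Theorem 4.1. Assume that every $\limsup_{t\to T}\|m_i(t)\|_{L^\infty}$ and $\limsup_{t\to T}\|n_i(t)\|_{L^\infty}$ is finite. Then each component norm is bounded on some terminal interval $[T-\delta,T)$, while on the compact piece $[0,T-\delta]$ the continuity of $t\mapsto\|M(t)\|_{H^s}$ together with the embedding $H^s\hookrightarrow L^\infty$ already force $\|M(t)\|_{L^\infty}$ to be bounded; combining these with the right-hand inequality above yields $\sup_{[0,T)}\|M(t)\|_{L^\infty}<\infty$. Consequently $\int_0^T\|M(\tau)\|_{L^\infty}^2\,d\tau<\infty$, and Theorem 4.1, read in contrapositive form, prohibits finite-time blow-up.

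The argument is essentially a bookkeeping reduction, so there is no serious analytic obstacle beyond what is already contained in Theorem 4.1. The only point requiring a little care is establishing a genuine uniform bound for $\|M\|_{L^\infty}$ on the entire interval $[0,T)$ in the converse direction, which is why I split $[0,T)$ into a compact part handled by continuity and a terminal part handled by the finiteness of the $\limsup$.
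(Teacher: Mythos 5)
Your proposal is correct and takes essentially the same route as the paper: one direction via the Sobolev embedding $H^s\hookrightarrow L^\infty$ (a divergent component forces $\limsup_{t\rightarrow T}\|M(t)\|_{H^s}=\infty$), the other via Theorem 4.1, which you invoke in contrapositive form where the paper invokes it directly. Your explicit component-wise comparison of $\|M\|_{L^\infty}$ with the $\|m_i\|_{L^\infty}$, $\|n_i\|_{L^\infty}$ and the splitting of $[0,T)$ into a compact part and a terminal part are just careful bookkeeping of steps the paper leaves implicit.
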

\begin{proof}
In fact, in Theorem 4.1 we have already deduced that if $M(t)$ blows up in finite time, we have $\int_0^T\|M(\tau)\|_{L^\infty}^2d\tau = \infty$ throw (31), which means $\displaystyle\limsup_{t\rightarrow T}\|m_i(t)\|=\infty$ or $\displaystyle\limsup_{t\rightarrow T}\|n_i(t)\|=\infty$. On the other hand, by Sobolev's embedding theorem $\|M(t)\|_{L^\infty} \le C\|M(t)\|_{H^s}$ with $s>\frac{1}{2}$, we have $\displaystyle \limsup_{t \rightarrow T}\|M(t)\|_{H^s}=\infty$.
\end{proof}
Next, we will discuss the second blow-up criterion for some reduced cases.

\textbf{Case 1.} $N=2, m_1=n_2=m, m_2=n_1=n, u_1=v_2=u, u_2=v_1=v, $
\begin{equation}
\begin{cases}
m_t+2uvm_x=-4u_xvm-2uv_xm,\\
n_t+2uvn_x=-4uv_xn-2u_xvn.\\
\end{cases}
\end{equation}

Consider the following initial value problem:

\begin{equation}
\begin{cases}
\dfrac{d\Phi(t,x)}{dt}=2uv(t,\Phi(t,x)),\\
\Phi(0,x)=x,
\end{cases}
\end{equation}
for the flow generated by $2uv$. Applying classical results in the theory of ODEs, we have the following lemma.

\begin{lemma}
Let $(m_0,n_0) \in (H^s(\mathbb{R}))^2$ with $s>\frac{1}{2}$ and let $T>0$ be the maximal existence time of the corresponding solution $(m,n)$ to (39). Then (40) has a unique solution $\Phi \in C^1([0,T];C^1(\mathbb{R}))$. Moreover, the map $\Phi(t,\cdot)$ is an increasing diffeomorphism of $\mathbb{R}$ with

\begin{equation}
\Phi_x(t,x)=\exp\left(\int_0^t2(u_xv+uv_x)(s,\Phi(s,x))ds\right)>0.
\end{equation}
\end{lemma}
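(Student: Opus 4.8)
The plan is to treat the initial value problem (40) as a classical scalar ODE in the time variable for each fixed $x$, whose velocity field is smooth enough in the spatial variable to invoke the Cauchy--Lipschitz theorem together with the theory of smooth dependence on initial data, and then to read off the formula (41) by differentiating the flow in $x$.

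First I would pin down the regularity of the velocity $F(t,x):=2u(t,x)v(t,x)$. Since $(m,n)\in C([0,T];H^s)$ with $s>\tfrac12$ and $u=(1-\partial_x^2)^{-1}m$, $v=(1-\partial_x^2)^{-1}n$, the smoothing of the Bessel potential gives $u,v\in C([0,T];H^{s+2})$; as $s+2>\tfrac52$, the embedding $H^{s+2}\hookrightarrow C^2(\mathbb{R})$ shows that $u,v,u_x,v_x$ are bounded and Lipschitz in $x$, uniformly on $[0,T]$. Hence $F$ is continuous in $t$, continuously differentiable in $x$, and both $F$ and $\partial_x F=2(u_xv+uv_x)$ are bounded on $[0,T]\times\mathbb{R}$. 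With this in hand, Picard--Lindel\"of applied pointwise in $x$ yields for each $x$ a unique solution $t\mapsto\Phi(t,x)$, and the uniform-in-$x$ Lipschitz bound on $F$ forces this solution to exist on the entire interval $[0,T]$ on which $(m,n)$ is defined. The classical theorem on $C^1$ dependence of ODE solutions on the initial datum (applicable precisely because $F$ is $C^1$ in $x$) then gives $\Phi(t,\cdot)\in C^1(\mathbb{R})$ and $\Phi\in C^1([0,T];C^1(\mathbb{R}))$.

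To obtain the formula for $\Phi_x$, I would differentiate the flow equation in $x$. Setting $y(t):=\Phi_x(t,x)$ and using the chain rule, $y$ solves the linear scalar problem
\begin{equation*}
\frac{d}{dt}y(t)=2(u_xv+uv_x)(t,\Phi(t,x))\,y(t),\qquad y(0)=1,
\end{equation*}
whose unique solution is exactly
\begin{equation*}
\Phi_x(t,x)=\exp\Big(\int_0^t 2(u_xv+uv_x)(s,\Phi(s,x))\,ds\Big).
\end{equation*}
Since the integrand is bounded on $[0,T]$, the exponential is finite and strictly positive, so $\Phi(t,\cdot)$ is strictly increasing. Combining $\Phi_x>0$ with the bound $|\Phi(t,x)-x|\le\int_0^t\|F\|_{L^\infty}\,ds\le Ct$, which forces $\Phi(t,x)\to\pm\infty$ as $x\to\pm\infty$, we conclude via the inverse function theorem that $\Phi(t,\cdot)$ is an increasing $C^1$ diffeomorphism of $\mathbb{R}$ for every $t\in[0,T]$.

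The only genuinely technical point is the first one: verifying that the Bessel regularization lifts the $H^s$ control of $(m,n)$ to a uniform $C^1$ (Lipschitz) control of the velocity $2uv$ in the spatial variable. This is what simultaneously licenses unique solvability on all of $[0,T]$ and the differentiation of the flow under the integral sign. Once that regularity is established, existence, uniqueness, and the explicit formula for $\Phi_x$ follow immediately from classical ODE theory.
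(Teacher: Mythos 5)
Your proof is correct and follows exactly the route the paper intends: the paper offers no written proof, simply invoking ``classical results in the theory of ODEs,'' and your argument (Bessel smoothing plus Sobolev embedding to get a bounded, spatially Lipschitz $C^1$ velocity, Picard--Lindel\"of with $C^1$ dependence on initial data, and differentiation of the flow in $x$ to obtain the exponential formula for $\Phi_x$) is precisely the standard argument being cited. Your added verification of surjectivity of $\Phi(t,\cdot)$ via the bound $|\Phi(t,x)-x|\le Ct$ is a detail the paper leaves implicit, and it is handled correctly.
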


Now we show the blow up condition for (39).

\begin{theorem}
Let $(m_0,n_0) \in (H^s(\mathbb{R}))^2$ with $s>\frac{1}{2}$ and $T$ be the maximal existence time of the solution $(m,n)$ to (39). Then the solution blows up in finite time if and only if
\begin{equation}
\liminf_{t \rightarrow T}\{\inf_{x\in\mathbb{R}}u_xv(t,x)\} = -\infty\ \  or\ \  \liminf_{t \rightarrow T}\{\inf_{x\in\mathbb{R}}uv_x(t,x)\} = -\infty. \ \
\end{equation}
\end{theorem}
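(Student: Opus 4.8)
The plan is to prove the equivalence through the characteristic representation of $m$ and $n$ along the flow $\Phi$ supplied by Lemma 4.3, thereby reducing the two-variable slope condition to the scalar blow-up criterion of Corollary 4.2. First I would rewrite the two equations of (39) as linear transport equations along $\Phi$. Since $\frac{d}{dt}\big[m(t,\Phi(t,x))\big]=(m_t+2uv\,m_x)(t,\Phi(t,x))$, the first equation of (39) gives the scalar ODE $\frac{d}{dt}m(t,\Phi(t,x))=-(4u_xv+2uv_x)(t,\Phi(t,x))\,m(t,\Phi(t,x))$, and likewise $\frac{d}{dt}n(t,\Phi(t,x))=-(4uv_x+2u_xv)(t,\Phi(t,x))\,n(t,\Phi(t,x))$. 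Integrating these linear ODEs yields the explicit representation
$$m(t,\Phi(t,x))=m_0(x)\exp\Big(-\int_0^t(4u_xv+2uv_x)(s,\Phi(s,x))\,ds\Big),$$
together with the analogous formula for $n$ carrying $4uv_x+2u_xv$ in the exponent.

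For the necessity direction (blow-up $\Rightarrow$ the liminf condition) I would argue by contraposition. Suppose both slopes are bounded below, i.e. there is $K>0$ with $u_xv(t,x)\ge -K$ and $uv_x(t,x)\ge -K$ for all $(t,x)\in[0,T)\times\mathbb{R}$. Then $4u_xv+2uv_x\ge -6K$ and $4uv_x+2u_xv\ge -6K$, so the exponents above are dominated by $6Kt$. Because $\Phi(t,\cdot)$ is an onto diffeomorphism of $\mathbb{R}$ (Lemma 4.3), taking the supremum over $x$ converts the pointwise bounds into $\|m(t)\|_{L^\infty}\le\|m_0\|_{L^\infty}e^{6Kt}$ and $\|n(t)\|_{L^\infty}\le\|n_0\|_{L^\infty}e^{6Kt}$, both finite on $[0,T)$ when $T<\infty$. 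By Corollary 4.2 this precludes finite-time blow-up, which is the desired contradiction.

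For the sufficiency direction I would exploit the convolution structure $u=G*m$, $v=G*n$ with $G=\frac12 e^{-|x|}$, combined with Young's inequality exactly as in (28): $\|u_x\|_{L^\infty}\le\|G_x\|_{L^1}\|m\|_{L^\infty}=\|m\|_{L^\infty}$ and $\|v\|_{L^\infty}\le\|G\|_{L^1}\|n\|_{L^\infty}=\|n\|_{L^\infty}$, and symmetrically for $uv_x$. Hence $|u_xv|\le\|m\|_{L^\infty}\|n\|_{L^\infty}$ and $|uv_x|\le\|m\|_{L^\infty}\|n\|_{L^\infty}$ uniformly in $x$. Consequently, if either $\inf_{x}u_xv$ or $\inf_{x}uv_x$ tends to $-\infty$ as $t\to T$, then $\|m(t)\|_{L^\infty}\|n(t)\|_{L^\infty}$ must be unbounded, so $\|m(t)\|_{L^\infty}$ or $\|n(t)\|_{L^\infty}$ blows up and Corollary 4.2 gives finite-time blow-up.

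The main obstacle I anticipate is in the necessity direction: the exponential representation only controls $m$ and $n$ pointwise along each individual characteristic curve $s\mapsto\Phi(s,x)$, and the crux is to upgrade these curve-wise bounds to a genuine $L^\infty(\mathbb{R})$ estimate. This is where the full strength of Lemma 4.3 enters, namely that $\Phi(t,\cdot)$ maps onto all of $\mathbb{R}$, so that the supremum over characteristics coincides with the spatial $L^\infty$ norm required by Corollary 4.2. The remaining pieces — verifying the transport ODEs and the Young-inequality bounds — are routine.
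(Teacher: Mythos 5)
Your proposal is correct and follows essentially the same route as the paper: the same characteristic representation of $m$ and $n$ along the flow of Lemma 4.3, the same contraposition argument (lower bounds on $u_xv$ and $uv_x$ give exponential $L^\infty$ bounds on $m,n$, contradicting Corollary 4.2), and the same Young-inequality estimates $\|u_x\|_{L^\infty}\le\|m\|_{L^\infty}$, $\|v\|_{L^\infty}\le\|n\|_{L^\infty}$ for the converse. Your explicit remark that the surjectivity of $\Phi(t,\cdot)$ is what upgrades curve-wise bounds to $L^\infty$ bounds is a point the paper leaves implicit, but the substance of the argument is identical.
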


\begin{proof}
Consider (39) along the flow $2uv$, we have

\begin{equation*}
\begin{aligned}
\dfrac{dm}{dt}(t,\Phi(t,x))&=\Big(m_t+m_x\Phi_t)(t,\Phi(t,x)\Big)=\Big(m_t+2uvm_x\Big)(t,\Phi(t,x))\\
&=\Big(-(4u_xv+2uv_x)m(t,\Phi(t,x))\Big),
\end{aligned}
\end{equation*}

\begin{equation*}
\begin{aligned}
\dfrac{dn}{dt}(t,\Phi(t,x))&=\Big(n_t+n_x\Phi_t)(t,\Phi(t,x)\Big)=\Big(n_t+2uvn_x\Big)(t,\Phi(t,x))\\
&=\Big(-(4uv_x+2u_xv)n(t,\Phi(t,x))\Big).
\end{aligned}
\end{equation*}

Integrating with respect to $t$, we get

\begin{equation}
m(t,\Phi(t,x))=m_0\cdot\exp\Big(\int_0^t-(4u_xv+2uv_x)(s,\Phi(s,x))ds\Big),
\end{equation}
\begin{equation}
n(t,\Phi(t,x))=n_0\cdot\exp\Big(\int_0^t-(4uv_x+2u_xv)(s,\Phi(s,x))ds\Big).
\end{equation}

Suppose that $u_xv>-C$ and $uv_x>-C$, we obtain the estimation for $\|m(t,x)\|_{L^\infty}$ and $\|n(t,x)\|_{L^\infty}$
\begin{equation}
\begin{aligned}
&\|m(t,\cdot)\|_{L^\infty} \le \|m_0\|_{L^\infty}e^{6CT} \le C\|m_0\|_{H^s}e^{6CT} < \infty,\\
&\|n(t,\cdot)\|_{L^\infty}\  \le \|n_0\|_{L^\infty}e^{6CT}\  \le C\|n_0\|_{H^s}e^{6CT} < \infty.
\end{aligned}
\end{equation}
If the maximal existence time $T$ is finite, then $\|m(t)\|_{L^\infty}$ and $\|n(t)\|_{L^\infty}$ are both bounded,  which contradicts to Corollary 4.2.
On the other hand, as $u=G*m$ and $v=G*n$, with Young inequality, we have
\begin{equation}
\begin{aligned}
\|u\|_{L^\infty},\|u_x\|_{L^\infty} &\le \|m\|_{L^\infty},\\
\|v\|_{L^\infty},\|v_x\|_{L^\infty} &\le \|n\|_{L^\infty}.
\end{aligned}
\end{equation}
Thus, if (42) holds, the $L^\infty$ norm of $m(t)$ or $n(t)$ will tends to $+\infty$ as $t \rightarrow T$. By Corollary 4.2 and Sobolev embedding theorem, the solution $(m,n)$ of (39) will blow up in finite time.
\end{proof}

\textbf{Case 2.} $N=2, m_1=n_1=m, m_2=n_2=n, u_1=v_1=u, u_2=v_2=v, $
\begin{equation}
\begin{cases}
m_t+(u^2+v^2)m_x=-3m(uu_x+vv_x)-n(u_xv-uv_x),\\
n_t+(u^2+v^2)n_x=-3n(uu_x+vv_x)-m(uv_x-u_xv).\\
\end{cases}
\end{equation}

Consider the following initial value problem:

\begin{equation}
\begin{cases}
\dfrac{d\Phi(t,x)}{dt}=(u^2+v^2)(t,\Phi(t,x)),\\
\Phi(0,x)=x,
\end{cases}
\end{equation}
for the flow generated by $u^2+v^2$. Similarly with Lemma 4.3, we have the following lemma.

\begin{lemma}
Let $(m_0,n_0) \in (H^s(\mathbb{R}))^2$ with $s>\frac{1}{2}$ and let $T>0$ be the maximal existence time of the corresponding solution $(m,n)$ to (47). Then (48) has a unique solution $\Phi \in C^1([0,T];C^1(\mathbb{R}))$. Moreover, the map $\Phi(t,\cdot)$ is an increasing diffeomorphism of $\mathbb{R}$ with

\begin{equation}
\Phi_x(t,x)=\exp\left(\int_0^t2(uu_x+vv_x)(s,\Phi(s,x))ds\right)>0.
\end{equation}
\end{lemma}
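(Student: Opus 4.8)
The plan is to regard (48) as the flow generated by the time-dependent velocity $c(t,x):=(u^2+v^2)(t,x)$ and to deduce everything from classical ODE theory, in complete parallel with Lemma 4.3.

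First I would pin down the spatial regularity of $c$. Since $(m,n)\in C([0,T);(H^s)^2)$ with $s>\frac12$, and $u=G*m$, $v=G*n$ with $G=\frac12 e^{-|x|}$ the kernel of $(1-\partial_x^2)^{-1}$, convolution with $G$ gains two derivatives, so $u,v\in C([0,T);H^{s+2})$. Because $H^{s+1}$ is an algebra for $s+1>\frac12$, it follows that $c=u^2+v^2\in C([0,T);H^{s+2})$ and $c_x=2(uu_x+vv_x)\in C([0,T);H^{s+1})$; the embedding $H^{s+1}\hookrightarrow C_b^0$ then shows that $c$ and $c_x$ are bounded and continuous on $[0,T']\times\mathbb{R}$ for every $T'<T$. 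In particular $c(t,\cdot)$ is $C^1$ and uniformly Lipschitz in $x$ on each such slab.

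Second, with $c$ continuous in $t$ and bounded and Lipschitz in $x$, the Cauchy-Lipschitz theorem gives, for each $x$, a unique solution $t\mapsto\Phi(t,x)$ of (48) on $[0,T)$, and the classical theorem on $C^1$ dependence on the initial datum $x$ (applicable since $c$ is $C^1$ in space) yields $\Phi\in C^1([0,T);C^1(\mathbb{R}))$. To derive the asserted formula, I would then differentiate the identity $\partial_t\Phi=c(t,\Phi)$ with respect to $x$: setting $q:=\Phi_x$, the chain rule produces the linear ODE $\partial_t q=c_x(t,\Phi)\,q$ with $q(0,x)=1$, whose explicit solution is $q(t,x)=\exp\big(\int_0^t c_x(s,\Phi(s,x))\,ds\big)=\exp\big(\int_0^t 2(uu_x+vv_x)(s,\Phi(s,x))\,ds\big)$, which is precisely (49). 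Strict positivity of the exponential forces $\Phi_x(t,\cdot)>0$, so $\Phi(t,\cdot)$ is strictly increasing; combined with the surjectivity of $\Phi(t,\cdot)$ onto $\mathbb{R}$ (a consequence of the bound $|\Phi(t,x)-x|\le t\,\|c\|_{L^\infty}$), this makes $\Phi(t,\cdot)$ an increasing diffeomorphism of $\mathbb{R}$.

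The only genuinely delicate point is the first step: verifying that $c=u^2+v^2$ is bounded, Lipschitz, and $C^1$ in $x$ uniformly on compact time subintervals, since this is exactly what licenses the ODE machinery. Once this regularity is secured, the remaining steps are the routine Picard-Lindel\"of argument together with an explicit integration of the linearized flow, and they go through verbatim as in the proof of Lemma 4.3.
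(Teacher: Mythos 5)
Your proposal is correct and is essentially the paper's own argument: the paper gives no written proof for this lemma, simply invoking ``classical results in the theory of ODEs'' (as for Lemma 4.3), and your writeup supplies exactly those standard details --- regularity of the velocity $u^{2}+v^{2}$ via $u=G*m$, $v=G*n$ and the algebra/embedding properties of $H^{\sigma}$ for $\sigma>\tfrac12$, then Picard--Lindel\"of with $C^{1}$ dependence on data, and integration of the linearized flow to get (49). The only cosmetic point is that, since $T$ is the maximal existence time, the natural conclusion is $\Phi\in C^{1}([0,T);C^{1}(\mathbb{R}))$ as you wrote, rather than the closed interval $[0,T]$ appearing in the paper's statement.
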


The blow up condition for (47) is as follows.

\begin{theorem}
Let $(m_0,n_0) \in (H^s(\mathbb{R}))^2$ with $s>\frac{1}{2}$ and $T$ be the maximal existence time of the solution $(m,t)$ to (47). Then the solution blows up in finite time if and only if
\begin{equation}
\liminf_{t \rightarrow T}\{\inf_{x\in\mathbb{R}}(uu_x+vv_x)(t,x)\} = -\infty\ \  or\ \  \limsup_{t \rightarrow T}\|(uv_x-uv_x)(t)\|_{L^\infty} = +\infty. \ \
\end{equation}
\end{theorem}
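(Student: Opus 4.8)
The plan is to follow the characteristic flow $\Phi$ defined by (48) and convert the PDE system (47) into a coupled system of ordinary differential equations for $m$ and $n$ along $\Phi$, exactly as in the proof of Theorem 4.4. Writing $P=uu_x+vv_x$ and $Q=u_xv-uv_x$ (this is the cross term appearing in (47); the second condition in (51) should read $\limsup_{t\to T}\|(u_xv-uv_x)(t)\|_{L^\infty}=+\infty$), I would differentiate $m$ and $n$ along $\Phi$ and use $\Phi_t=u^2+v^2$ to obtain
\begin{equation*}
\frac{dm}{dt}(t,\Phi)=-3Pm-Qn,\qquad \frac{dn}{dt}(t,\Phi)=Qm-3Pn .
\end{equation*}
In contrast to Case 1, these equations do not decouple: the off-diagonal entries $-Q$ and $Q$ tie $m$ and $n$ together, so the explicit integration leading to (43)--(44) is no longer available.

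The key observation, and the main point of the argument, is that the coupling is skew-symmetric and therefore disappears for the combined quantity $m^2+n^2$. Computing along $\Phi$,
\begin{equation*}
\frac{d}{dt}\bigl(m^2+n^2\bigr)=2m(-3Pm-Qn)+2n(Qm-3Pn)=-6P\bigl(m^2+n^2\bigr),
\end{equation*}
the cross terms $\mp 2Qmn$ cancelling. Integrating yields
\begin{equation*}
(m^2+n^2)(t,\Phi(t,x))=(m_0^2+n_0^2)(x)\exp\!\left(-6\int_0^t(uu_x+vv_x)(s,\Phi(s,x))\,ds\right),
\end{equation*}
which plays the role that (43)--(44) played in Case 1. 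Finding this combination is the crux; once it is in hand both implications are routine.

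For the necessity direction I would argue by contraposition. Assuming $uu_x+vv_x\ge -C$ on $[0,T)$, the exponent above is bounded by $6CT$, so $(m^2+n^2)(t,\Phi(t,x))\le(\|m_0\|_{L^\infty}^2+\|n_0\|_{L^\infty}^2)e^{6CT}$. Since $\Phi(t,\cdot)$ is a diffeomorphism of $\mathbb{R}$ by Lemma 4.5, every point is of the form $\Phi(t,x)$, so this bounds $\|m(t)\|_{L^\infty}$ and $\|n(t)\|_{L^\infty}$ uniformly on $[0,T)$; if $T<\infty$ this contradicts Corollary 4.2. Note that boundedness of $P$ from below alone suffices here, so the conclusion is in fact slightly stronger than the stated disjunction.

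For the sufficiency direction I would use the representations $u=G*m$, $v=G*n$ together with Young's inequality, as in (46), to get $\|u\|_{L^\infty},\|u_x\|_{L^\infty}\le\|m\|_{L^\infty}$ and $\|v\|_{L^\infty},\|v_x\|_{L^\infty}\le\|n\|_{L^\infty}$, whence
\begin{equation*}
|uu_x+vv_x|\le\|m\|_{L^\infty}^2+\|n\|_{L^\infty}^2,\qquad |u_xv-uv_x|\le 2\|m\|_{L^\infty}\|n\|_{L^\infty}.
\end{equation*}
Thus if $\liminf_{t\to T}\inf_x(uu_x+vv_x)=-\infty$ then $\|m\|_{L^\infty}^2+\|n\|_{L^\infty}^2\to\infty$ along a sequence $t_k\to T$, while if $\limsup_{t\to T}\|u_xv-uv_x\|_{L^\infty}=+\infty$ then $\|m\|_{L^\infty}\|n\|_{L^\infty}\to\infty$; in either case $\|m(t)\|_{L^\infty}$ or $\|n(t)\|_{L^\infty}$ becomes unbounded, so Corollary 4.2 forces blow-up at the finite time $T$. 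The only genuine obstacle is the coupling in the characteristic ODEs, which the $m^2+n^2$ device circumvents; everything else parallels Theorem 4.4.
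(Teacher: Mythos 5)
Your proposal is correct, and in the crucial ``no blow-up'' (necessity) direction it departs from the paper's argument in a substantive way. The paper works from the Duhamel representations (51)--(52): it keeps the diagonal damping $e^{-3\int P}$ as an integrating factor, treats the skew coupling terms $-Qn$ and $+Qm$ (with $Q=u_xv-uv_x$) as sources, and therefore must assume \emph{both} $uu_x+vv_x>-C$ \emph{and} $\|u_xv-uv_x\|_{L^\infty}<C$ to close the estimate, which it does by applying Gronwall's inequality to $\|m\|_{L^\infty}+\|n\|_{L^\infty}$ (inequalities (53)--(54)). You instead exploit the skew-symmetry of the coupling, so that $\tfrac{d}{dt}\bigl(m^2+n^2\bigr)=-6(uu_x+vv_x)\bigl(m^2+n^2\bigr)$ along the flow; this integrates exactly, dispenses with Gronwall, and requires only the lower bound on $uu_x+vv_x$. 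The payoff is a genuinely sharper statement: blow-up is equivalent to $\liminf_{t\to T}\inf_{x}(uu_x+vv_x)=-\infty$ alone, the second condition in (50) being redundant (indeed, by your own sufficiency argument it implies the first). Your sufficiency direction, via $u=G*m$, $v=G*n$, Young's inequality (46) and Corollary 4.2, is the same as the paper's, and you also correctly flag the typo $uv_x-uv_x$ in (50), which should read $u_xv-uv_x$ as in system (47).
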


\begin{proof}
With a similar technique in proof of Theorem 4.4, we obtain

\begin{equation*}
\begin{aligned}
\dfrac{dm}{dt}(t,\Phi(t,x))&=\Big(m_t+m_x\Phi_t)(t,\Phi(t,x)\Big)=\Big(m_t+(u^2+v^2)m_x\Big)(t,\Phi(t,x))\\
&=\Big(-3m(uu_x+vv_x)-n(u_xv-uv_x)\Big)(t,\Phi(t,x)),
\end{aligned}
\end{equation*}

\begin{equation*}
\begin{aligned}
\dfrac{dn}{dt}(t,\Phi(t,x))&=\Big(n_t+n_x\Phi_t)(t,\Phi(t,x)\Big)=\Big(n_t+(u^2+v^2)n_x\Big)(t,\Phi(t,x))\\
&=\Big(-3n(uu_x+vv_x)-m(uv_x-u_xv)\Big)(t,\Phi(t,x)).
\end{aligned}
\end{equation*}

Integrating with respect to $t$, we have

\begin{equation}
\begin{aligned}
m(t,\Phi(t,x))&=m_0\cdot\exp\Big(\int_0^t-3(uu_x+vv_x)(s,\Phi(s,x))ds\Big)\\
&\ \ \ -\int_0^tn(u_xv-uv_x)\exp\Big(\int_s^t-3(uu_x+vv_x)(\tau,\Phi(\tau,x))d\tau\Big)(s,\Phi(s,x))ds,
\end{aligned}
\end{equation}

\begin{equation}
\begin{aligned}
n(t,\Phi(t,x))&=n_0\cdot\exp\Big(\int_0^t-3(uu_x+vv_x)(s,\Phi(s,x))ds\Big)\\
&\ \ \ -\int_0^tm(uv_x-u_xv)\exp\Big(\int_s^t-3(uu_x+vv_x)(\tau,\Phi(\tau,x))d\tau\Big)(s,\Phi(s,x))ds.
\end{aligned}
\end{equation}

Suppose that $uu_x+vv_x>-C$ and $\|u_xv-uv_x\|_{L^\infty}<C$, we get the boundary of $\|m(t,x)\|_{L^\infty}+\|n(t,x)\|_{L^\infty}$
\begin{equation}
\|m(t,\cdot)\|_{L^\infty}+\|n(t,\cdot)\|_{L^\infty} \le  e^{3CT}\Big((\|m_0\|_{L^\infty}+\|n_0\|_{L^\infty})+C\int_0^t(\|m(s,\cdot)\|_{L^\infty}+\|n(s,\cdot)\|_{L^\infty})ds\Big).
\end{equation}

Applying Gronwall's inequality, we obtain

\begin{equation}
\begin{aligned}
\|m(t,\cdot)\|_{L^\infty}+\|n(t,\cdot)\|_{L^\infty} &\le (\|m_0\|_{L^\infty}+\|n_0\|_{L^\infty})e^{3CT}\cdot e^{CTe^{3CT}}\\
&\le C(\|m_0\|_{H^s}+\|n_0\|_{H^s})e^{3CT}\cdot e^{CTe^{3CT}}<\infty.
\end{aligned}
\end{equation}
If the maximal existence time $T$ is finite, then $\|m(t)\|_{L^\infty}$ and $\|n(t)\|_{L^\infty}$ are both bounded,  which contradicts to Corollary 4.2.
On the other hand, with a similar argument in Theorem 4.4, if (50) holds, by (46) we obtain the solution $(m,n)$ of (47) will blow up in finite time.
\end{proof}

\textbf{Case 3.} For $N=1$, (1) is reduced to GX equation, we claim that Theorem 4.4 is also suitable for (2), the proof of the following theorem is very similar to Theomre 4.4 just with a slight modification, thus we only show the result here.
\begin{theorem}
Let $(m_0,n_0) \in (H^s(\mathbb{R}))^2$ with $s>\frac{1}{2}$ and $T$ be the maximal existence time of the solution $(m,n)$ to (2). Then the solution blows up in finite time if and only if (42) holds
\begin{equation*}
\liminf_{t \rightarrow T}\{\inf_{x\in\mathbb{R}}u_xv(t,x)\} = -\infty\ \  or\ \  \liminf_{t \rightarrow T}\{\inf_{x\in\mathbb{R}}uv_x(t,x)\} = -\infty. \ \
\end{equation*}
\end{theorem}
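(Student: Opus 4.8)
The plan is to transcribe the characteristic-method argument of Theorem 4.4, taking advantage of the fact that for the Geng--Xue system (2) the transport speed is $uv$ and, more importantly, the reaction terms \emph{decouple}: along the flow the $m$-equation feels only $u_xv$ and the $n$-equation feels only $uv_x$. First I would introduce the flow $\Phi$ solving $\frac{d\Phi}{dt}=uv(t,\Phi(t,x))$ with $\Phi(0,x)=x$, and invoke the exact analogue of Lemma 4.3 (classical ODE theory together with the assumed $H^s$, $s>\tfrac12$, local theory) to obtain that $\Phi(t,\cdot)$ is an increasing diffeomorphism of $\mathbb{R}$ with
$$\Phi_x(t,x)=\exp\Big(\int_0^t(u_xv+uv_x)(s,\Phi(s,x))\,ds\Big)>0.$$
Differentiating $m$ and $n$ along $\Phi$ and using (2) gives $\frac{d}{dt}m(t,\Phi)=-3u_xv\,m(t,\Phi)$ and $\frac{d}{dt}n(t,\Phi)=-3uv_x\,n(t,\Phi)$, which integrate to the pure exponential formulas
$$m(t,\Phi(t,x))=m_0(x)\exp\Big(-3\int_0^tu_xv(s,\Phi(s,x))\,ds\Big),$$
$$n(t,\Phi(t,x))=n_0(x)\exp\Big(-3\int_0^tuv_x(s,\Phi(s,x))\,ds\Big).$$

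For the forward implication I would argue by contraposition. Suppose (42) fails, so there is $C>0$ with $u_xv>-C$ and $uv_x>-C$ on $[0,T)$. Since $\Phi(t,\cdot)$ is a diffeomorphism, $\sup_x|m(t,\Phi(t,x))|=\|m(t)\|_{L^\infty}$, and the formulas above yield $\|m(t)\|_{L^\infty}\le\|m_0\|_{L^\infty}e^{3CT}$ and $\|n(t)\|_{L^\infty}\le\|n_0\|_{L^\infty}e^{3CT}$, both finite when $T<\infty$; bounding $\|m_0\|_{L^\infty}\le C\|m_0\|_{H^s}$ via Sobolev embedding closes the estimate. Hence $\limsup_{t\to T}\|m(t)\|_{L^\infty}$ and $\limsup_{t\to T}\|n(t)\|_{L^\infty}$ remain finite, and Corollary 4.2 rules out blow-up; equivalently, blow-up forces (42).

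For the converse I would use $u=G*m$, $v=G*n$ with $G=\tfrac12 e^{-|x|}$ and Young's inequality to obtain $\|u_x\|_{L^\infty}\le\|m\|_{L^\infty}$ and $\|v\|_{L^\infty}\le\|n\|_{L^\infty}$ (and symmetrically $\|u\|_{L^\infty}\le\|m\|_{L^\infty}$, $\|v_x\|_{L^\infty}\le\|n\|_{L^\infty}$). Then $u_xv\ge-\|u_x\|_{L^\infty}\|v\|_{L^\infty}\ge-\|m\|_{L^\infty}\|n\|_{L^\infty}$, so if $\inf_x u_xv\to-\infty$ (or likewise $\inf_x uv_x\to-\infty$) as $t\to T$, then $\|m(t)\|_{L^\infty}\|n(t)\|_{L^\infty}\to\infty$, forcing at least one of $\|m(t)\|_{L^\infty}$, $\|n(t)\|_{L^\infty}$ to blow up; by Corollary 4.2 and Sobolev embedding the solution blows up in finite time.

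I do not expect a genuine obstacle: because the Geng--Xue reaction terms decouple along characteristics, the two alternatives in (42) separate cleanly, without the Gronwall-type coupling argument that was needed for Case 2 (Theorem 4.6), and the whole proof collapses to the exponential estimates plus Corollary 4.2. The only points demanding care are the routine ones, namely verifying that $\Phi(t,\cdot)$ is a diffeomorphism so that along-characteristic bounds transfer to genuine $L^\infty$ bounds, and checking that the local theory supplies enough regularity ($u,v\in W^{1,\infty}$) to legitimize both the flow and the convolution estimates; both follow from the $H^s$, $s>\tfrac12$, well-posedness assumed in the statement.
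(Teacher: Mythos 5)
Your proposal is correct and is precisely the ``slight modification'' of the Theorem 4.4 argument that the paper alludes to when it omits the proof of this case: the same flow $\Phi$ generated by the transport speed (here $uv$ instead of $2uv$), the same exponential formulas along characteristics (matching (57)--(58) used later in Section 5), contraposition via Corollary 4.2 for the forward direction, and the Young-inequality bounds $\|u\|_{L^\infty},\|u_x\|_{L^\infty}\le\|m\|_{L^\infty}$, $\|v\|_{L^\infty},\|v_x\|_{L^\infty}\le\|n\|_{L^\infty}$ for the converse. Your observation that the Geng--Xue reaction terms decouple along characteristics, so no Gronwall step (as in Theorem 4.6) is needed, is also exactly right.
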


\section{Global existence}
In this section, we will discuss the global existence of (2), which is a two-component reduction of (1).
First we show that GX equation admits the following conserved quantity
\begin{equation}
H=\int_\mathbb{R}mvdx=\int_\mathbb{R}nudx=\int_\mathbb{R}(uv+u_xv_x)dx,
\end{equation}
which is crucial to the proof of global existence.
\begin{lemma}
Let $(m_0,n_0) \in (H^s(\mathbb{R}))^2$ with $s>\frac{1}{2}$, $T>0$ is the maximum existence time of the corresponding solution $(m,n)$ of (2). Then for all $t \in [0,T)$, we have
$$\frac{d}{dt}H=0.$$
\end{lemma}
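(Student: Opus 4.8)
The plan is to show that the functional $H = \int_{\mathbb{R}} mv\,dx$ is conserved by differentiating under the integral sign, substituting the evolution equations from the GX system (2), and integrating by parts until all terms cancel. First I would write $\frac{d}{dt}H = \int_{\mathbb{R}} (m_t v + m v_t)\,dx$ and then replace $m_t$ using the first equation of (2), namely $m_t = -uvm_x - 3u_xvm$. The term $m v_t$ requires expressing $v_t$ in terms of the evolution of $n$: since $n = v - v_{xx} = (1-\partial_x^2)v$, we have $v_t = (1-\partial_x^2)^{-1} n_t = G * n_t$, where $G = \tfrac12 e^{-|x|}$, and $n_t = -uvn_x - 3uv_xn$ from the second equation. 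The regularity $s > \tfrac12$ guarantees $m, n \in H^s$ with $s>\tfrac12$, which combined with the smoothing of $G$ makes $u, v \in H^{s+2}$, so every integrand below is integrable and all integrations by parts are justified with vanishing boundary terms (functions decay at $\pm\infty$).

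The key computational step is to recognize the self-adjointness relations that tie $m$ and $v$ together. I would exploit the identity $\int_{\mathbb{R}} mv\,dx = \int_{\mathbb{R}}(uv + u_xv_x)\,dx$, already recorded in (57), which follows from $m = (1-\partial_x^2)u$ and one integration by parts; this symmetric form $\int (uv+u_xv_x)\,dx$ is manifestly symmetric in $u,v$ and is the reason $\int mv = \int nu$. Working from the symmetric representation may be cleaner: differentiate $H=\int(uv+u_xv_x)\,dx$ to get $\int (u_t v + u v_t + u_{xt}v_x + u_x v_{xt})\,dx$, integrate the $x$-derivative terms by parts to collect $\int (u_t(1-\partial_x^2)v + v_t(1-\partial_x^2)u)\,dx = \int(u_t n + v_t m)\,dx$. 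Then substitute $u_t = G*m_t$, $v_t = G*n_t$ and use the self-adjointness of $G$ to move the convolution onto the factors $n$ and $m$, reducing everything to $\int m_t v + n_t u\,dx$ with $m_t, n_t$ given explicitly by (2).

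After substitution, the remaining work is to verify that
\[
\int_{\mathbb{R}} \bigl(-(uvm_x + 3u_xvm)v\bigr)\,dx + \int_{\mathbb{R}} \bigl(-(uvn_x+3uv_xn)u\bigr)\,dx = 0,
\]
which I expect to reduce, after integrating the $m_x$ and $n_x$ terms by parts (throwing the derivative onto $uv^2$ and $u^2v$ respectively), to a cancellation of cubic-type terms in $u,v$ and their derivatives. The main obstacle I anticipate is purely algebraic bookkeeping: after the integrations by parts one obtains several terms of the schematic form $\int u_x v \, m v\,dx$, $\int uv\,v_x m\,dx$, and their $u\leftrightarrow v$ counterparts, and one must carefully re-express products like $mv$ and $nu$ back through $m=(1-\partial_x^2)u$ to confirm the exact cancellation; keeping track of the coefficient $3$ and the signs is where an error is most likely to creep in. The regularity and decay needed to discard all boundary terms is routine given $s>\tfrac12$, so the crux is the identity, not the analysis.
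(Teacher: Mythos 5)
Your setup is the same as the paper's: differentiate the symmetric form $\int_{\mathbb{R}}(uv+u_xv_x)\,dx$, integrate by parts to reach $\int_{\mathbb{R}}(m_tv+n_tu)\,dx$, substitute the GX equations, and integrate the $m_x$, $n_x$ terms by parts. (Your detour through $u_t=G*m_t$, $v_t=G*n_t$ and self-adjointness of $G$ is harmless but unnecessary; with enough regularity one passes directly from $\int(u_tv+uv_t+u_{xt}v_x+u_xv_{xt})\,dx$ to $\int(m_tv+n_tu)\,dx$ by parts.) However, the proposal stops exactly where the real proof begins: you reduce the lemma to the identity
\[
\int_{\mathbb{R}}\bigl(uv^2m_x+3u_xv^2m+u^2vn_x+3u^2v_xn\bigr)\,dx=0,
\]
and then defer its verification to ``algebraic bookkeeping,'' anticipating that cubic-type terms will cancel. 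That cancellation is not term-by-term, and identifying its mechanism is the one genuine idea in the paper's proof. After the integrations by parts the integrand collects into the factored form
\[
2\,(uv_x-u_xv)(vm-un),
\]
and the decisive observation is that
\[
vm-un \;=\; v(u-u_{xx})-u(v-v_{xx}) \;=\; uv_{xx}-u_{xx}v \;=\; \partial_x\bigl(uv_x-u_xv\bigr),
\]
so the integrand is the exact derivative $\partial_x\bigl[(uv_x-u_xv)^2\bigr]$ and the integral vanishes.

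Without this factorization (or an equivalent chain of further integrations by parts), the terms that survive after the obvious cancellations, namely
\[
2\int_{\mathbb{R}}\bigl(u_xv^2u_{xx}-uvv_xu_{xx}+u^2v_xv_{xx}-uu_xvv_{xx}\bigr)\,dx,
\]
do not cancel pointwise; only their integral is zero, and seeing that requires recognizing a perfect-derivative structure. So your plan is workable and follows the paper's route, but as written the crux of the argument is missing rather than merely routine: you should either exhibit the factorization above or carry out the iterated integrations by parts explicitly to exhibit the exact derivative.
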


\begin{proof}
Arguing by density, we only consider the case for $(u,v) \in C_0^\infty(\mathbb{R})$. Taking advantage of (2) and integration by parts, we deduce that
\begin{equation}
\begin{aligned}
\frac{d}{dt}\int_\mathbb{R}(uv+u_xv_x)dx&=\int_\mathbb{R}(u_tv+uv_t+u_{tx}v_x+u_xv_{tx})dx=\int_\mathbb{R}(m_tv+n_tu)dx\\
&=-\int_\mathbb{R}(uv^2m_x+3u_xv^2m+u^2vn_x+3u^2v_xn)dx\\
&=2\int_\mathbb{R}(uvv_xm-u_xv^2m+uu_xvn-u^2v_xn)dx\\
&=2\int_\mathbb{R}(uv_x-u_xv)(vm-un)dx\\
&=2\int_\mathbb{R}(uv_x-u_xv)(uv_{xx}-u_{xx}v)dx=\int[(uv_x-u_xv)^2]_xdx=0.
\end{aligned}
\end{equation}
\end{proof}

The global existence of GX equation can be stated as follows.
\begin{theorem}
Suppose that $(m_0,n_0)\in (H^s(\mathbb{R}))^2$ with $s >\frac{1}{2}$, and $m_0,n_0$ do not change sign. Then the corresponding strong solution $(m,n)$ of (2) is global in time.
\end{theorem}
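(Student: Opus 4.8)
The plan is to reduce global existence to the blow-up criterion of Theorem 4.7: it suffices to show that for sign-definite data the quantities $\inf_{x}u_xv(t,x)$ and $\inf_{x}uv_x(t,x)$ remain bounded below uniformly on $[0,T)$, so that neither $\liminf$ in (42) can reach $-\infty$. The first ingredient is sign propagation. Along the characteristic flow $\Phi_t=uv(t,\Phi)$ associated to (2) (as in Theorem 4.7), the equations integrate to $m(t,\Phi(t,x))=m_0(x)\exp\big(-\int_0^t 3u_xv\,ds\big)$ and $n(t,\Phi(t,x))=n_0(x)\exp\big(-\int_0^t 3uv_x\,ds\big)$. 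Since $\Phi(t,\cdot)$ is an increasing diffeomorphism, $m(t,\cdot)$ keeps the sign of $m_0$ and $n(t,\cdot)$ keeps the sign of $n_0$ for every $t\in[0,T)$.

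Without loss of generality take $m_0\ge0$ (the other sign being identical by linearity). From $u=G*m$ with $G=\tfrac12 e^{-|x|}$ one computes $u-u_x=e^{-x}\int_{-\infty}^x e^{y}m\,dy$ and $u+u_x=e^{x}\int_{x}^{\infty}e^{-y}m\,dy$, so $m\ge0$ forces $u\ge0$ and the pointwise bound $|u_x|\le u$; likewise $|v_x|\le |v|$. Consequently $|u_xv_x|\le|uv|$, whence the integrand of the conserved quantity obeys $\operatorname{sgn}(uv+u_xv_x)=\operatorname{sgn}(m_0n_0)$ everywhere. Thus $uv+u_xv_x$ has constant sign, and by Lemma 5.1 $\int_{\mathbb{R}}|uv+u_xv_x|\,dx=|H|$. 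The same reasoning gives $\operatorname{sgn}(mv)=\operatorname{sgn}(nu)=\operatorname{sgn}(m_0n_0)$, so that $\int_{\mathbb{R}}|mv|\,dx=\int_{\mathbb{R}}|nu|\,dx=|H|$.

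The key identity uses $u_{xx}=u-m$: one has $\frac{d}{dx}(u_xv)=u_{xx}v+u_xv_x=(uv+u_xv_x)-mv$, and integrating from $-\infty$, where $u_xv\to0$, gives
\[
u_x(x)v(x)=\int_{-\infty}^{x}(uv+u_xv_x)\,dy-\int_{-\infty}^{x}mv\,dy.
\]
Since both integrands have constant sign with total mass $|H|$, each partial integral is at most $|H|$ in absolute value, so $|u_xv(x)|\le 2|H|$ uniformly in $x$. Symmetrically, $\frac{d}{dx}(uv_x)=(uv+u_xv_x)-un$ yields $|uv_x(x)|\le 2|H|$. As $H$ is conserved, these bounds are uniform on $[0,T)$, so $\inf_x u_xv\ge-2|H|$ and $\inf_x uv_x\ge-2|H|$ can never tend to $-\infty$; by Theorem 4.7 the solution cannot break down in finite time, i.e. $T=\infty$.

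The main obstacle is the sign bookkeeping rather than any hard analysis: one must rigorously extract $|u_x|\le|u|$ from the Green's function representation and verify that each of $uv+u_xv_x$, $mv$, $nu$ genuinely keeps a constant sign, which is precisely where the hypothesis that $m_0,n_0$ do not change sign is consumed. As in Lemma 5.1, the differential identities and the vanishing of boundary terms should first be established for Schwartz data and then extended to the $H^s$ solution by density, relying on the decay of $u,u_x,v,v_x$ afforded by $m,n\in H^s$ and $u,v\in H^{s+2}$.
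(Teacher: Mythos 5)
Your proof is correct, and its key estimate is genuinely different from the paper's. Both arguments share the same scaffolding: sign persistence of $m,n$ along the flow of $uv$ (the exponential formulas (57)--(58)), the Green's-function inequalities $u\ge 0$, $|u_x|\le u$ (and likewise for $v$), conservation of $H=\int_{\mathbb{R}} mv\,dx=\int_{\mathbb{R}} nu\,dx=\int_{\mathbb{R}}(uv+u_xv_x)\,dx$ from Lemma 5.1, and the blow-up criterion of Theorem 4.7. Where you diverge is in how the criterion is defused. The paper runs an $H^1$ energy estimate,
$\tfrac{d}{dt}\|u\|_{H^1}^2=\int_{\mathbb{R}}(2u^2v_xm-2uu_xmv)\,dx\le 4\|u\|_{L^\infty}^2\int_{\mathbb{R}} mv\,dx=4\|u\|_{L^\infty}^2\int_{\mathbb{R}} m_0v_0\,dx$,
then Gronwall, so that $\|u\|_{H^1},\|v\|_{H^1}$ (hence $\|u_xv\|_{L^\infty}$ and $\|uv_x\|_{L^\infty}$, via Sobolev embedding and $|u_x|\le u$, $|v_x|\le v$) stay finite on every finite time interval, though with exponentially growing bounds. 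You instead integrate the pointwise identities $\partial_x(u_xv)=(uv+u_xv_x)-mv$ and $\partial_x(uv_x)=(uv+u_xv_x)-nu$ from $-\infty$, and exploit that each integrand carries the constant sign $\operatorname{sgn}(m_0n_0)$ with total mass $|H|$; this yields the \emph{time-uniform} bounds $|u_xv|,|uv_x|\le 2|H|$ (in fact $\le|H|$, since the two partial integrals lie in the same interval $[0,|H|]$ or $[-|H|,0]$, so their difference is at most $|H|$ in modulus). Your route is cleaner and quantitatively stronger on exactly the quantities appearing in criterion (42), at the price of giving no bound on $\|u\|_{H^1}$ itself, which the paper's energy method does provide. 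One cosmetic correction: the reduction ``WLOG $m_0\ge 0$'' follows from the symmetry $(u,v)\mapsto(-u,-v)$ of (2), not from ``linearity''; your sign bookkeeping with $\operatorname{sgn}(m_0n_0)$ already covers the mixed-sign cases, so this is only a wording issue.
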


\begin{proof}
We only consider that $m_0,n_0 \ge0$ and others are similar. Consider (2) along the curve $uv$, with a similar computation in Theorem 4.4, we have
\begin{equation}
m(t,\Phi(t,x))=m_0\cdot\exp\Big(\int_0^t-3u_xv(s,\Phi(s,x))ds\Big),
\end{equation}
\begin{equation}
n(t,\Phi(t,x))=n_0\cdot\exp\Big(\int_0^t-3uv_x(s,\Phi(s,x))ds\Big).
\end{equation}
It is obvious that $m,n$ will not change sign if $m_0,n_0$ are nonnegative.

As $m=u-u_{xx}$, we have $u=G*m=\int_\mathbb{R} \frac{1}{2}e^{-|x-y|}m(y)dy$, where $G(x)=\frac{1}{2}e^{-|x|}$ is the poisson kernel of Bessel potential operator. It is obvious that $u>0$ as
\begin{equation}
u=\displaystyle \frac{1}{2}e^x\int_x^\infty e^{-y}m(y)dy+\frac{1}{2}e^{-x}\int_{-\infty}^xe^ym(y)dy >0.
\end{equation}
For $u_x$, we have
\begin{equation}
u_x=\displaystyle \frac{1}{2}e^x\int_x^\infty e^{-y}m(y)dy-\frac{1}{2}e^{-x}\int_{-\infty}^xe^ym(y)dy.
\end{equation}
Plug (59) and (60), we can check $u+u_x \ge 0$ and $u-u_x \ge 0$ easily, that is to say
\begin{equation}
(1 \pm \partial_x)u(t,x) \ge 0,\ \ \ \forall (t,x) \in [0,T) \times \mathbb{R},
\end{equation}
here $T>0$ is the maximal existence time of (2). With a similar computation, we also have
\begin{equation}
(1 \pm \partial_x)v(t,x) \ge 0,\ \ \ \forall (t,x) \in [0,T) \times \mathbb{R}.
\end{equation}

In order to prove the global existence, a natural idea is to consider whether the $\|\cdot\|_{L^\infty}$ of $u$, $u_x$, $v$ and $v_x$ can be controlled by time-parameter $t$ through Theorem 4.7. By Sobolev's embedding theorem $H^1 \hookrightarrow L^\infty$ and (61)-(62), it remains to prove that $\|u\|_{H^1}$ and $\|v\|_{H^1}$ are bounded.

Differentiating $\|u\|_{H^1}^2$ with respect to $t$ and integrating by parts, we have
\begin{equation}
\begin{aligned}
\dfrac{d}{dt}\|u\|_{H^1}^2&=2\int_\mathbb{R} uu_t+u_xu_{xt}dx=2\int_\mathbb{R} um_tdx\\
&=\int_\mathbb{R} -2u^2vm_x-6uu_xvmdx=\int 2u^2v_xm-2uu_xmvdx\\
&\le 2\|u\|_{L^\infty}^2\int_\mathbb{R} |mv_x|dx+(\|u\|_{L^\infty}^2+\|u_x\|_{L^\infty}^2)\int_\mathbb{R} mvdx\\
&\le 4\|u\|_{L^\infty}^2\int_\mathbb{R} mvdx =4\|u\|_{L^\infty}^2\int_\mathbb{R} m_0v_0dx\\
&\le 4C\|u\|_{H^1}^2\|m_0\|_{L^2}\|n_0\|_{L^2}.
\end{aligned}
\end{equation}

Applying the Gronwall's inequality, we have
$$\|u_x\|_{L^\infty} \le \|u\|_{L^\infty} \le C\|u\|_{H^1} \le C\|u_0\|_{H^1}e^{2\|m_0\|_{L^2}\|n_0\|_{L^2}Ct},$$
and similarly, we also deduce that
$$\|v_x\|_{L^\infty} \le \|v\|_{L^\infty} \le C\|v\|_{H^1} \le C\|v_0\|_{H^1}e^{2\|m_0\|_{L^2}\|n_0\|_{L^2}Ct}.$$
Thus, by Theorem 4.7, the proof is completed.
\end{proof}

\begin{remark}
Assume $(m_0,n_0) \in (H^s(\mathbb{R}))^2$ with $s>\frac{1}{2}$ are the initial data of (39). By (43) and (44), we know if $m_0,n_0 \ge 0$, then the corresponding solution $(m,n)$ will remains non-negative for all $t \in [0,T)$. It is easily to check that both
$$H_1=\int_\mathbb{R}(u^2+u_x^2)dx,\ \ \ H_2=\int_\mathbb{R}(v^2+v_x^2)dx,$$
are conserved quantities of (39). Thus, with a similar computation of Theorem 5.2, we claim that (39) is global in time under the initial condition $(m_0 \ge 0,n_0 \ge 0) \in (H^s(\mathbb{R}))^2$ with $s>\frac{1}{2}$.
\end{remark}

\section{Peakon and periodic Peakon}
In this section, we show that system (1) also admits peakons and periodic peakons, just as same as Camassa-Holm equation. The peaked and periodic peaked solutions (with periodic 1) of CH are stated as follows
$$\psi_c=ce^{-|x-ct|}, \ \ \ \ \ \ \psi_p=\dfrac{1}{ch(\tfrac{1}{2})}ch(\zeta),$$
where $\zeta=\frac{1}{2}-(x-ct)+[x-ct]$ and $c$ is an arbitrary constant.

Obviously, the function $\psi_c$ and $\psi_p$ are not smoothness, thus we should order the definition of weak solution to system (1) first.
\begin{definition} Solutions $u_i$ and $v_i$ of system (1) are called weak, if for any function $\phi(t,x) \in C^1([0,T];C_0^\infty({\Omega}))$, we have
\begin{equation}
\begin{aligned}
\displaystyle \int_0^T\int_\Omega\Big(m_i\phi_t+\sum_{j=1}^N&\left(-2m_iu_{jx}v_j-m_iu_jv_{jx}-m_{ix}u_jv_j-u_{ix}m_jv_j+u_im_jv_{jx}\right)\Big)\phi dxdt\\
&=\displaystyle \int_\Omega m_i(T)\phi(T)dx - \int_\Omega m_i(0)\phi(0) dx,\\
\displaystyle\int_0^T \int_\Omega\Big(n_i\phi_t+\sum_{j=1}^N&\left(-2n_iu_jv_{jx}-n_iu_{jx}v_j-n_{ix}u_jv_j-v_{ix}n_ju_j+v_in_ju_{jx}\right)\Big)\phi dxdt\\
&=\displaystyle \int_\Omega n_i(T)\phi(T)dx - \int_\Omega n_i(0)\phi(0) dx.\\
\end{aligned}
\end{equation}
\end{definition}

\textbf{Peakon}
Now, we suppose $u_i=p_i(t)E(\xi)$, $v_i=q_i(t)E(\xi)$, where $E=e^{-|\xi|}=e^{-|x-s(t)|}$, $i=\{1,\cdots,N\}$. Put $u_i$ and $v_i$ into (1) and multiply both sides with $\phi$, take integral with respect to $x$, we have
\begin{equation}
\begin{aligned}
I_1=\displaystyle\int_\mathbb{R}m_{it}\phi dx &=\sum_{j=1}^N\int_\mathbb{R}\left(-2m_iu_{jx}v_j-m_iu_jv_{jx}-m_{ix}u_jv_j-u_{ix}m_jv_j+u_im_jv_{jx}\right)\phi dx\\
&=p_i\sum_{j=1}^Np_jq_j\int_\mathbb{R}\left(-4E^2E_x\phi+3EE_xE_{xx}\phi+E^2E_{xxx}\phi \right)dx\\
&=p_i\sum_{j=1}^Np_jq_j\int_\mathbb{R}\left(-4E^2E_x\phi-\frac{1}{2}E_x^3\phi+\frac{3}{2}EE_x^2\phi_x+E^2E_x\phi_{xx}\right)dx\\
&=p_i\sum_{j=1}^Np_jq_jI_2.
\end{aligned}
\end{equation}
For the left hand side of (65) $I_1$, we have
\begin{equation}
\begin{aligned}
I_1&=\int_\mathbb{R}(p_i(E-E_{xx}))_t\phi dx\\
&=\int_{-\infty}^s(\dot p_i-p_i\dot s)e^{x-s}(\phi-\phi_{xx})dx+\int_s^\infty(\dot p_i+p_i\dot s)e^{s-x}(\phi-\phi_{xx})dx\\
&=(\dot p_i-p_i\dot s)(e^{x-s}(\phi-\phi_x)|_{-\infty}^s)-(\dot p_i+p_i\dot s)(e^{s-x}(\phi+\phi_x)|_s^\infty)\\
&=2\dot p_i\phi(s)+2p_i\dot s\phi_x(s),
\end{aligned}
\end{equation}
and for $I_2$, with a simple computation, we obtain
\begin{equation}
\begin{aligned}
I_2&=\displaystyle\int_{-\infty}^se^{3(x-s)}(-\frac{9}{2}\phi+\frac{3}{2}\phi_x+\phi_{xx})dx+\int_s^\infty e^{3(s-x)}(\frac{9}{2}\phi+\frac{3}{2}\phi_x-\phi_{xx})\\
&=\int_{-\infty}^s-\frac{3}{2}(e^{3(x-s)}\phi)_x+(e^{3(x-s)}\phi_x)_xdx+\int_s^\infty-\frac{3}{2}(e^{3(s-x)}\phi)_x-(e^{3(s-x)}\phi_x)_xdx\\
&=2\phi_x(s).
\end{aligned}
\end{equation}

Take (66) and (67) into (65) and compare the coefficients of $\phi(s)$ and $\phi_x(s)$, we have the following ODES,
\begin{equation}
\begin{cases}
\dot p_i=0,\\
p_i\dot s=p_i\sum_{j=1}^Np_jq_j.
\end{cases}
\end{equation}

With a similar computation, it's easy to deduce that
\begin{equation}
\begin{cases}
\dot q_i=0,\\
q_i\dot s=q_i\sum_{j=1}^Np_jq_j.
\end{cases}
\end{equation}

Thus, Eq(1) admits peak solution
\begin{equation}
u_i=p_ie^{-|x-ct|}, v_i=q_ie^{-|x-ct|},
\end{equation}
where $p_i$ and $q_i$ are some constants, and the velocity $c=\sum_{j=1}^Np_jq_j$. It's easy to check that (70) satisfies (64) just with an integration by parts with respect to $t$, here we omit it.

\textbf{Periodic peakon}

Similarly as (65), suppose $u_i=p_i(t)E,v_i=q_i(t)E,E=ch(x-s_i(t)-[x-s(t)]-\frac{1}{2})$, $x \in \mathbb{S}=\mathbb{R}/\mathbb{Z}$, $i=\{1,\cdots, N\}$, where $ch(x)$ is hyperbolic cosine(cosh). With a text function $\phi \in C^\infty([0,T);\Omega)$ and a slightly modification in (65), we have
\begin{equation}
\begin{aligned}
J_1=\displaystyle\int_\mathbb{S}m_{it}\phi dx&=p_i\sum_{j=1}^Np_jq_j\int_\mathbb{S}\left(-4E^2E_x\phi-\frac{1}{2}E_x^3\phi+\frac{3}{2}EE_x^2\phi_x+E^2E_x\phi_{xx}\right)dx\\
&=p_i\sum_{j=1}^Np_jq_jJ_2.
\end{aligned}
\end{equation}
Denote $\xi=x-s+\tfrac{1}{2}$, $\zeta=x-s-\tfrac{1}{2}$, with a direct computation for $J_1$ and $J_2$, we have
\begin{equation}
\begin{aligned}
J_1&=\int_\mathbb{S}(p_i(E-E_{xx}))_t\phi dx\\
&=\int_0^s(\dot p_i\cdot ch\xi-p_i\dot s \cdot sh\xi)(\phi-\phi_{xx})dx+\int_s^1(\dot p_i ch\zeta-p_i\dot s \cdot sh\zeta)(\phi-\phi_{xx})dx\\
&=\dot p_i\Big(sh\tfrac{1}{2}\phi(s)-ch\tfrac{1}{2}\phi_x(s)\Big)-p_i\dot s\Big(ch\tfrac{1}{2}\phi(s)-sh\tfrac{1}{2}\phi_x(s)\Big)\\
&+\dot p_i\Big(sh\tfrac{1}{2}\phi(s)+ch\tfrac{1}{2}\phi_x(s)\Big)-p_i\dot s\Big(-ch\tfrac{1}{2}\phi(s)-sh\tfrac{1}{2}\phi_x(s)\Big)\\
&=2\dot p_ish\tfrac{1}{2}\phi(s)+2p_i\dot s sh\tfrac{1}{2}\phi_x(s),
\end{aligned}
\end{equation}

\begin{equation}
\begin{aligned}
J_2&=\displaystyle\int_0^s\left(-4ch^2\xi\cdot sh\xi\cdot\phi-\frac{1}{2}sh^3\xi\cdot\phi+\frac{3}{2}ch\xi\cdot sh^2\xi\cdot\phi_x+ch^2\xi\cdot sh\xi\cdot\phi_{xx}\right)\\
&+\int_s^1\left(-4ch^2\zeta\cdot sh\zeta\cdot\phi-\frac{1}{2}sh^3\zeta\cdot\phi+\frac{3}{2}ch\zeta\cdot sh^2\zeta\cdot\phi_x+ch^2\zeta\cdot sh\zeta\cdot\phi_{xx}\right)\\
&=\int_0^s\left((ch^2\xi\cdot sh\xi\cdot\phi_x)_x-(ch^3\xi\cdot+\frac{1}{2}ch\xi\cdot sh^2\xi)\phi)_x\right)\\
&+\int_s^1\left((ch^2\zeta\cdot sh\zeta\cdot\phi_x)_x-(ch^3\zeta+\frac{1}{2}ch\zeta\cdot sh^2\zeta)\phi)_x\right)\\
&=2ch^2\tfrac{1}{2}\cdot sh\tfrac{1}{2}\cdot\phi_x(s).
\end{aligned}
\end{equation}
Similarly to (68) and (69), we have
\begin{equation}
\begin{cases}
\dot p_i=0,\\
\dot q_i=0,\\
\dot s=ch^2\frac{1}{2}\cdot\sum_{j=1}^Np_jq_j.
\end{cases}
\end{equation}
Thus, (1) admits periodic peaked solution
\begin{equation*}
u_i=p_i\cdot ch(x-ct-[x-ct]-\tfrac{1}{2}),\ \ \  v_i=q_i\cdot ch(x-ct-[x-ct]-\tfrac{1}{2}),
\end{equation*}
where $p_i,q_i$ are some constants, and $c=ch^2\frac{1}{2}\cdot\sum_{j=1}^Np_jq_j$.

\section{Acknowledgments}
Zhigang Li's research is supported partially by NSFC (Grant No. 11871471, 11931017). Yuxi Hu's research is supported by NSFC(Grant No. 11701556).

\end{document}